\documentclass{amsart}
\usepackage{amsxtra}
\usepackage{mathdots}
\usepackage{mathrsfs}
\usepackage{verbatim}
\usepackage{calc}
\usepackage{graphicx}
\usepackage[breaklinks=true]{hyperref}
\usepackage[shortalphabetic,initials]{amsrefs}
\usepackage{bdsstandard}
\usepackage{pdfsync}

\theoremstyle{plain}
\newtheorem{thm}[equation]{Theorem}
\newtheorem*{thm*}{Theorem}
\newtheorem*{mthm}{Main Theorem}
\newtheorem{prop}[equation]{Proposition}
\newtheorem*{prop*}{Proposition}
\newtheorem{cor}[equation]{Corollary}
\newtheorem*{cor*}{Corollary}
\newtheorem{lem}[equation]{Lemma}
\newtheorem*{lem*}{Lemma}

\newtheorem*{conj*}{Conjecture}

\theoremstyle{definition}

\newtheorem*{defn*}{Definition}

\newtheorem*{eg*}{Example}

\newtheorem*{ex*}{Exercise}
\newtheorem{rk}[equation]{Remark}
\newtheorem*{rk*}{Remark}

\newtheorem*{ntn*}{Notation}

\DeclareMathOperator{\Adm}{Adm}
\DeclareMathOperator{\Conv}{Conv}
\DeclareMathOperator{\Perm}{Perm}

\entrymodifiers={+!!<0pt,\fontdimen22\textfont2>}

\newcommand{\aff}{\ensuremath{\mathrm{a}}\xspace}

\begin{document}

\title[Admissibility and permissibility for minuscule cocharacters]{Admissibility and permissibility for minuscule cocharacters in orthogonal groups}
\author{Brian D. Smithling}
\address{University of Toronto, Department of Mathematics, 40 St.\ George St.,\ Toronto, ON  M5S 2E4, Canada}
\email{bds@math.toronto.edu}
\subjclass[2010]{Primary 05E15; Secondary 14G35; 17B22; 20G15}
\keywords{Iwahori-Weyl group; admissible set; orthogonal group; local model}

\begin{abstract}
For a given cocharacter $\mu$, $\mu$-admissibility and $\mu$-permissibility are combinatorial notions introduced by Kottwitz and Rapoport that arise in the theory of bad reduction of Shimura varieties.  In this paper we prove that $\mu$-admissibility is equivalent to $\mu$-permissibility in all previously unknown cases of minuscule cocharacters $\mu$ in Iwahori-Weyl groups attached to split orthogonal groups.  This, combined with other cases treated previously by Kottwitz--Rapoport and the author, establishes the equivalence of $\mu$-admissibility and $\mu$-permissibility for all minuscule cocharacters in split classical groups, as conjectured by Rapoport.
\end{abstract}

\maketitle

\section{Introduction}
\numberwithin{equation}{section}

Motivated by considerations arising from the theory of bad reduction of certain Shimura varieties, in \cite{kottrap00} Kottwitz and Rapoport introduced the combinatorial notions of \emph{$\mu$-admissibility} and \emph{$\mu$-permissibility} for elements in any extended affine Weyl group $\wt W$ relative to any cocharacter $\mu$ in $\wt W$.  The \emph{equivalence} of these notions for certain $\mu$ and $\wt W$ has proved to be an important ingredient in showing that the \emph{Rapoport-Zink local model} \cite{rapzink96} (or certain refinements of it) is flat, or at least topologically flat, in many cases; see, for example, work of G\"ortz \cites{goertz01,goertz03,goertz05}, Pappas and Rapoport \cite{paprap05}, and the author \cites{sm09a,sm09c}.

In general, Kottwitz and Rapoport showed that $\mu$-admissibility always implies $\mu$-permissibility, but Haines and Ng\^o \cite{hngo02b} subsequently showed that the converse can fail.  However, again casting an eye towards the theory of Shimura varieties, Rapoport \cite{rap05}*{\s3, p.~283} conjectured
that the converse should always hold at least when $\mu$ is \emph{minuscule} (but see \eqref{rk:sum_minuscules} for further remarks on the conjecture).  The object of this paper is to complete the proof of Rapoport's conjecture for \emph{split classical groups}, or in other words, for $\wt W$ attached to root data involving only types $A$, $B$, $C$, and $D$.  In light of previous work of Kottwitz-Rapoport \cite{kottrap00} and the author \cite{sm09a}, this reduces to proving the conjecture in the previously untreated cases of minuscule cocharacters in split orthogonal groups.

Let
\begin{equation}\label{disp:mu}
   \mu := (1,0,0,\dotsc,0) \in \ZZ^n.
\end{equation}
Then we may regard $\mu$ as a cocharacter of $O_{2n}$ or of $O_{2n+1}$ in the standard way, and $\mu$ is minuscule for either of these groups.  Our result is the following.

\begin{mthm}\label{st:adm=perm}
Let $\wt W$ be the Iwahori-Weyl group (see \s\ref{ss:i-w_gp}) attached to the split group $O_{2n}$ or $O_{2n+1}$.  Then there is an equality of subsets of $\wt W$,
\[
   \Adm(\mu) = \Perm(\mu).
\]
\end{mthm}

We shall give a review of the rest of the terminology in the statement in \s\ref{ss:defs}.
To fix ideas, we shall work throughout with the groups $O_{2n}$ and $O_{2n+1}$ per se, but everything extends readily to the similitude groups $GO_{2n}$ and $GO_{2n+1}$ or other variants.

The pair $(G,\mu)$ for $G = GO_{2n}$ or $GO_{2n+1}$ arises for certain Shimura varieties of Hodge type, and it is expected that the Main Theorem will be relevant to the (as yet completely undeveloped) theory of the corresponding local models.  Although the pair $(G,\mu)$ is not directly related to local models for PEL Shimura varieties, the equality $\Adm(\mu) = \Perm(\mu)$ for $GO_{2n+1}$, where the underlying root system is of type $B_n$, is more-or-less the essential combinatorial ingredient in proving topological flatness of the Pappas--Rapoport spin local models \cite{paprap09} for ramified, quasi-split $GU_{2n}$ in the case of signature $(2n-1,1)$.  We shall give the details of the proof --- and, in fact, the proof of topological flatness for arbitrary signature --- in the forthcoming article \cite{sm10a}.  Somewhat notably, for other signatures (which lead to the consideration of \emph{nonminuscule} coweights for $B_n$) the notions of admissibility and permissibility are typically \emph{not} equivalent, and it is the admissible set that is always the ``good'' set.

The contents of the paper are as follows.  In \s\ref{s:survey} we give a brief survey of the current literature on $\mu$-admissibility and $\mu$-permissibility.  In \s\ref{s:i-w_gps} we review the Iwahori-Weyl groups of the orthogonal groups.  In \s\ref{s:perm} we obtain a formulation of $\mu$-permissibility in the even case in terms of extended alcoves.  We prove the Main Theorem in the even and odd cases in \s\ref{s:thm_even} and \s\ref{s:thm_odd}, respectively.

\subsection*{Acknowledgments}
It is a pleasure to thank Robert Kottwitz for helpful conversation and encouragement and Michael Rapoport for his continuing support and advice.   I also thank the two of them together with Ulrich G\"ortz and Tom Haines for reading and offering comments on a preliminary version of this paper.  I finally thank the referee for carefully reading the paper and offering some helpful corrections and suggestions.

\subsection*{Notation}
We work throughout with respect to a fixed integer $n \geq 2$.  We write $i^* := 2n+1-i$ for any integer $i$.

Given a vector $v$ with entries in \RR, we write $v(i)$ for its $i$th entry, $\Sigma v$ for the sum of its entries, and $v^*$ for the vector with $v^*(i) = v(i^*)$.  Given another vector $w$, we write $v \leq w$ if $v(i) \leq w(i)$ for all $i$.  We write $(a^{(r)}, b^{(s)}, \dotsc)$ for the vector with $a$ repeated $r$ times, followed by $b$ repeated $s$ times, and so on.  We write $\mathbf d$ for the vector $(d,d,\dotsc,d)$.

We write $S_r$ for the symmetric group on $1,\dotsc$, $r$.

\section{Survey of known results}\label{s:survey}
\numberwithin{equation}{subsection}

Before turning to the main body of the article, we give a brief summary of known results in the literature on $\mu$-admissibility and $\mu$-permissibility.  In this section only, we change notation and let $\mu$ denote an arbitrary cocharacter; afterwards we shall resume using $\mu$ for the particular cocharacter \eqref{disp:mu}.  The main references are the papers of Kottwitz and Rapoport \cite{kottrap00}, Haines and Ng\^o \cite{hngo02b}, and the author \cite{sm09a}.

\subsection{Definitions}\label{ss:defs}
Let $(X^*,X_*,R,R^\vee,\Pi)$ be a based root datum.  The \emph{extended affine Weyl group} is the semidirect product $X_* \rtimes W$, where $W$ denotes the Weyl group of the root datum.  The \emph{affine Weyl group $W_\aff$} is the subgroup $Q^\vee \rtimes W$ of $\wt W$, where $Q^\vee := \ZZ R^\vee$ denotes the coroot lattice.  Let $V := X_* \tensor_\ZZ \RR$.  The root datum determines a distinguished alcove $A$ in $V$, namely the alcove contained in the positive Weyl chamber and whose closure contains the origin; and $W_\aff$ is a Coxeter group generated by the reflections across the walls of $A$.  This choice of Coxeter generators determines a Bruhat order on $W_\aff$, which extends to $\wt W$ in the usual way:  $\wt W$ decomposes as a semidirect product $W_\aff \rtimes \Omega$, where $\Omega$ is the subgroup of elements in $\wt W$ that stabilize $A$; and for elements $x\omega$, $x'\omega' \in \wt W$ with $x$, $x' \in W_\aff$ and $\omega$, $\omega' \in \Omega$, we say $x\omega \leq x'\omega'$ if $\omega = \omega'$ and $x \leq x'$ in $W_\aff$.

%
%

Given any cocharacter $\mu \in X_*$, Kottwitz and Rapoport define the $\mu$-admissible and $\mu$-permissible sets in $\wt W$ as follows.  To avoid confusing the action of $W$ on $X_*$ with multiplication in $\wt W$, we write $t_{\mu'}$ when we wish to regard a cocharacter $\mu'$ as an element of $\wt W$.  The \emph{$\mu$-admissible set} is the set
\[
   \Adm(\mu) := \{\, w\in \wt W \mid w \leq t_{\mu'} \text{ for some } \mu' \in W\mu \,\}.
\]
The \emph{$\mu$-permissible set} is the set
\[
   \Perm(\mu) := \{\, w \in \wt W \mid w \equiv t_\mu \bmod W_\aff
                    \text{ and } wa - a \in \Conv(W\mu) \text{ for all } a \in A\,\},
\]
where $\Conv(W\mu)$ is the convex hull in $V$ of the $W$-orbit of $\mu$.  Note that the containment $wa - a \in \Conv(W\mu)$ holds for all $a \in A$ $\iff$ for each facet $F$ of $A$ of minimal dimension, the containment $wa -a \in \Conv(W\mu)$ holds for some $a \in F$.

\subsection{Statements}
The main results on admissibility and permissibility are as follows.

\begin{thm}\label{st:survey}
Let $\wt W$ be the extended affine Weyl group attached to a based root datum.
\begin{enumerate}
\renewcommand{\theenumi}{\roman{enumi}}
\item
   \emph{(Kottwitz-Rapoport \cite{kottrap00}*{11.2})}\, $\Adm(\mu) \subset \Perm(\mu)$ for any cocharacter $\mu$.\ssk
\item\label{it:A}
   \emph{(Haines-Ng\^o \cite{hngo02b}*{3.3}; Kottwitz-Rapoport \cite{kottrap00}*{3.5})}\, If the root datum is of type $A$, then $\Adm(\mu) = \Perm(\mu)$ for any cocharacter $\mu$.\ssk
\item\label{it:GSp}
   \emph{(Haines-Ng\^o \cite{hngo02b}*{10.1}; Kottwitz-Rapoport \cite{kottrap00}*{4.5, 12.4})}\, Let $\wt W$ be the extended affine Weyl group of $GSp_{2n}$.  Then $\Adm(\mu) = \Perm(\mu)$ for any $\mu$ which is a sum of dominant minuscule cocharacters.\ssk
\item\label{it:classical}
   \emph{(Kottwitz-Rapoport \cite{kottrap00}*{3.5, 4.5}; Smithling \cite{sm09a}*{7.6.1}; Main Theorem)}\, Let $\wt W$ be the extended affine Weyl group of $GL_n$, $GSp_{2n}$, $GO^\circ_{2n}$, or $GO^\circ_{2n+1}$.  Then $\Adm(\mu) = \Perm(\mu)$ for any minuscule cocharacter $\mu$.\ssk
\item\label{it:general_classical}
   More generally, suppose the root datum involves only types $A$, $B$, $C$, and $D$.  Then $\Adm(\mu) = \Perm(\mu)$ for any minuscule cocharacter $\mu$.\ssk
\item\label{it:Adm_neq_Perm}
   \emph{(Haines-Ng\^o \cite{hngo02b}*{7.2})}\, Suppose the root datum is irreducible of rank $\geq 4$ and not of type $A$.  Then $\Adm(\mu) \neq \Perm(\mu)$ for every sufficiently regular cocharacter $\mu$.
\end{enumerate}
\end{thm}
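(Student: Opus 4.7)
The inclusion $\Adm(\mu) \subset \Perm(\mu)$ is already established in \cite{kottrap00}, so the substance of the theorem is the reverse containment $\Perm(\mu) \subset \Adm(\mu)$. For the minuscule cocharacter $\mu = (1,0,\dotsc,0)$, the Weyl orbit $W\mu$ is the full set $\{\pm e_i\}_{i=1}^n$ in both the even and odd cases, and $\Conv(W\mu)$ is the cross-polytope (the unit $\ell^1$-ball) in $V = \RR^n$. The permissibility condition $wa - a \in \Conv(W\mu)$ therefore translates into a sharp coordinatewise estimate on the displacement of each vertex of the base alcove $A$ under $w$, which one can hope to read off mechanically from any reasonable combinatorial model of $\wt W$.

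My plan is to follow the \emph{going-up} strategy used in \cite{hngo02b} and \cite{sm09a} for the previously known cases. First, in \S\ref{s:i-w_gps}--\S\ref{s:perm}, set up a combinatorial model of $\wt W$ in terms of extended alcoves, i.e.\ sequences of vectors in $\RR^{2n}$ or $\RR^{2n+1}$ subject to the symmetry, sum, and integrality conditions imposed by the orthogonal form and the pairing $i \leftrightarrow i^*$. In this model, $w \in \Perm(\mu)$ becomes a finite list of explicit inequalities on successive entries of these vectors. Then, for every $w \in \Perm(\mu)$ which is not of the form $t_{\mu'}$ with $\mu' \in W\mu$, exhibit an affine simple reflection $s$ with $\ell(sw) = \ell(w)+1$ and $sw \in \Perm(\mu)$. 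Iterating produces a chain $w < sw < \cdots$ in the Bruhat order that must terminate, since a direct combinatorial check shows that the maximal elements of $\Perm(\mu) \cap t_\mu W_\aff$ are precisely the translations in $\{t_{\mu'} : \mu' \in W\mu\}$; this gives $w \leq t_{\mu'}$ for some $\mu' \in W\mu$, hence $w \in \Adm(\mu)$.

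The hard part is Step 2: for each non-translation $w$, one must locate a raising reflection that simultaneously preserves \emph{every} one of the permissibility inequalities, not just the ones adjacent to the chosen wall. Because $\mu$ is as small as a nonzero minuscule cocharacter can be, the slack in each inequality is at most $1$, so the choice of $s$ is tightly constrained and the argument will proceed by a case analysis based on the local pattern of entries of the extended alcove near a position witnessing $w \neq t_{\mu'}$. I would treat the even case first (\S\ref{s:thm_even}), since the extended-alcove combinatorics of $D_n$ is cleaner and closer in spirit to the $GSp_{2n}$ treatment of \cite{sm09a}; once the raising reflections and the attendant case analysis are in place, the odd case (\S\ref{s:thm_odd}) should follow by a careful adaptation that accounts for the differences between $B_n$ and $D_n$ at the level of the affine Dynkin diagram and the corresponding walls of $A$. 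The subtle $\ZZ/2$-extension distinguishing $\wt W_{O_{2n}}$ from $\wt W_{SO_{2n}}$ (and its analogue in the odd case) must be tracked throughout, but it only shifts the translation part within a fixed $W_\aff$-coset and so does not interfere with the going-up moves.
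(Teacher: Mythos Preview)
Your proposal is really a sketch of the \emph{Main Theorem} (the orthogonal case of part~\eqref{it:classical}), not of Theorem~\ref{st:survey} itself. In the paper, the proof of Theorem~\ref{st:survey} is almost entirely a matter of citation: parts (i)--(iii) and (vi) are taken directly from the references, and the only work done here is (a) for part~\eqref{it:classical}, to enumerate the dominant minuscule cocharacters of $GO_{2n}^\circ$ and $GO_{2n+1}^\circ$ and observe that $\Adm(\mu)=\Perm(\mu)$ is invariant under $\mu\mapsto\mu+\mathbf 1$, so that everything reduces to the three cocharacters handled in \cite{sm09a} and the one handled by the Main Theorem; and (b) for part~\eqref{it:general_classical}, to reduce an arbitrary based root datum of types $A$--$D$ to the specific groups in~\eqref{it:classical} via the auxiliary datum $\R'=(Q,P^\vee,R,R^\vee,\Pi)$ and the factorization into irreducible components. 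You address neither (a) nor (b); in particular, your proposal contains no argument for~\eqref{it:general_classical} at all.

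As for the Main Theorem itself, your going-up outline matches the paper's strategy in the even case, with two differences worth flagging. First, the paper does \emph{not} restrict to simple reflections: the raising reflections $s_{\wt\alpha}$ in Proposition~\ref{st:strong_thm_even} are taken from carefully chosen non-simple affine roots such as $\wt\alpha_{i,\sigma^{-1}(i);1}$, and the Bruhat increase is verified via the geometric criterion of \eqref{st:bo} rather than $\ell(sw)=\ell(w)+1$. Insisting on simple reflections would force you to show that every length-one step stays inside $\Perm(\mu)$, which is not what the paper does and is not obviously true. Second, for the odd case the paper does \emph{not} rerun the case analysis; instead it realizes the $B_n$ root datum as the Steinberg fixed-point datum of $D_{n+1}$ under $\Theta\colon x_{n+1}\mapsto -x_{n+1}$, embeds $\wt W_{2n+1}\hookrightarrow\wt W_{2n+2}$, checks that $\mu$-permissibility transfers, and then invokes Corollary~\ref{st:=<_trans_part} together with the Bruhat-order inheritance of \eqref{st:bo_inheritance}. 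Your ``careful adaptation'' is a different route; it may well work, but it is not what the paper does, and the Steinberg trick is both shorter and avoids a second case analysis.
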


\begin{rk}\label{rk:clarification}
In \eqref{it:A} (resp.,\ \eqref{it:GSp}), the result was proved for $GL_n$ and minuscule cocharacters (resp.,\ minuscule cocharacters) by Kottwitz and Rapoport, and in general by Haines and Ng\^o.  In \eqref{it:classical}, the result was proved for $GL_n$ and $GSp_{2n}$ by Kottwitz and Rapoport, and for the orthogonal groups by the author.  In \eqref{it:Adm_neq_Perm}, see \cite{hngo02b}*{proof of 7.2} for the precise meaning of ``sufficiently regular.''
\end{rk}

\begin{proof}[Proof of \eqref{st:survey}]
Only \eqref{it:classical} and \eqref{it:general_classical} require comment beyond the cited references.  In \eqref{it:classical}, we just need the following trivial remark on orthogonal groups:  with respect to the standard choices of simple roots, the dominant minuscule cocharacters for $GO_{2n}^\circ$ consist of the cocharacters $\mu + \mathbf d$ for $d$ any integer and $\mu$ one of the cocharacters
\[
   (1,0^{(2n-2)},-1),\quad (1^{(n)},0^{(n)}),\quad(1^{(n-1)},0,1,0^{(n-1)})
\]
(the first of these corresponds to the cocharacter denoted $\mu$ in the Introduction, and the other two are treated in \cite{sm09a}); the dominant minuscule cocharacters for $GO_{2n+1}^\circ$ consist of the cocharacters
\[
   (1,0^{(2n-1)},-1) + \mathbf d
\]
for $d$ any integer; and for either group, for any cocharacter $\mu$, the equality $\Adm(\mu) = \Perm(\mu)$ holds $\iff$ the equality $\Adm( \mu + \mathbf 1 ) = \Perm( \mu + \mathbf 1 )$ holds.

Part \eqref{it:general_classical} reduces to \eqref{it:classical} in a straightforward way.  Let $\R = (X^*,X_*,R,R^\vee,\Pi)$ be a based root datum, and consider the root datum
\[
   \R' := (Q,P^\vee,R,R^\vee,\Pi),
\]
where $Q \subset X^*$ is the root lattice and $P^\vee$ is the coweight lattice in the vector space $V := \Hom_\RR(Q\tensor_\ZZ\RR, \RR)$.  The cocharacters $X_*$ map naturally to $V$ with kernel $Z := \{x\in X_* \mid \langle x,\alpha\rangle = 0 \text{ for all } \alpha \in R\}$ and image contained in $P^\vee$.  The root data \R and $\R'$ have canonically isomorphic Weyl groups and affine Weyl groups, and we get a natural homomorphism of extended affine Weyl groups $\pi \colon \wt W_\R \to \wt W_{\R'}$ with kernel $Z$; here and below we use subscripts \R and $\R'$ to denote objects attached to $\R$ and $\R'$, respectively.  Given a cocharacter $\mu \in X_*$, let $\ol\mu$ denote its image in $P^\vee$.  Then $\pi$ carries $\Adm_\R(\mu) \isoarrow \Adm_{\R'}(\ol\mu)$ and $\Perm_\R(\mu) \isoarrow \Perm_{\R'}(\ol\mu)$; and moreover we have $\Adm_\R(\mu) = \Perm_\R(\mu)$ if and only if $\Adm_{\R'}(\ol\mu) = \Perm_{\R'}(\ol\mu)$.  Now, on the one hand, for each of the root data \R in the statement of \eqref{it:classical}, $X_*$ maps \emph{onto} $P^\vee$, and we therefore find an equality between admissible and permissible sets for all minuscule coweights in the reduced, irreducible root systems of types $A$, $B$, $C$, and $D$.  On the other hand, taking \R to be the given root datum in \eqref{it:general_classical}, we get $\wt W_{\R'} \ciso \prod_i P^\vee_i \rtimes W_i$, where the product runs through the irreducible components $R_i$ of $R$, each with coweight lattice $P^\vee_i$ and Weyl group $W_i$.  It follows from what we have just said that admissibility and permissibility are equivalent for all minuscule coweights in each factor, and this immediately implies the same in $\wt W_{\R'}$.
\end{proof}

\begin{rk}
More generally, one may consider $\mu$-admissibility and $\mu$-permissi\-bil\-ity in the \emph{Iwahori-Weyl group} attached to any connected reductive group over a discretely valued field; see \citelist{\cite{rap05}*{\s\s2--3}\cite{paprap09}*{\s\s2.1--2.2}}.  In this way, extended affine Weyl groups attached to root data correspond to \emph{split} groups.  As noted by Rapoport, the sets $\Adm(\mu)$ and $\Perm(\mu)$ can fail to be equal, even for minuscule $\mu$, for nonsplit groups.
\end{rk}

\begin{rk}
As noted in the Introduction, and in addition to the examples of Haines and Ng\^o in \eqref{it:Adm_neq_Perm}, in \cite{sm10a} we shall give some explicit examples of cocharacters $\mu$ in rank $n$ root data of type $B_n$, $n \geq 3$, for which $\Adm(\mu) \neq \Perm(\mu)$.
\end{rk}

\begin{rk}\label{rk:sum_minuscules}
In \cite{rap05}*{\s3, p.~283}, Rapoport actually formulates a more optimistic conjecture than the one cited in the Introduction, namely, that the equality $\Adm(\mu) = \Perm(\mu)$ should hold in any extended affine Weyl group whenever $\mu$ is a sum of dominant minuscule cocharacters.  We shall show by explicit example in \cite{sm10a} that this more optimistic conjecture is false.
\end{rk}

\begin{rk}
There is another variant of permissibility, called \emph{strong permissibility}, due to Haines and Ng\^o.  They prove that strong $\mu$-permissibility always implies $\mu$-admissibility, and that these notions, along with the notion of $\mu$-permissibility, are all equivalent for root data of type $A$.  It would be interesting to clarify the situation for other root types.
\end{rk}

\section{Iwahori-Weyl groups of orthogonal groups}\label{s:i-w_gps}

Rather than working directly with the extended affine Weyl groups of the connected groups $SO_{2n}$ and $SO_{2n+1}$ (say over a field of characteristic not $2$), we shall allow ourselves the harmless added bit of generality of working with the Iwahori-Weyl groups of the full orthogonal groups.  We review the relevant material in this section.

\subsection{Iwahori-Weyl group}\label{ss:i-w_gp}
For any split (but not necessarily connected) reductive group $G$ with split maximal torus $T$, the \emph{Iwahori-Weyl group} is the semidirect product $\wt W_G := X_*(T) \rtimes W_G$, where $W_G$ is the Weyl group.  As in the previous section, we write $t_\nu$ for the cocharacter $\nu$ regarded as an element of $\wt W_G$, and we refer to such elements as \emph{translation elements} in $\wt W_G$.  For $w = t_\nu\sigma \in \wt W_G$ with $\sigma \in W_G$, we refer to $t_\nu$ as the \emph{translation part} of $w$ and $\sigma$ as the \emph{linear part} of $w$.

We shall identify the Iwahori-Weyl groups of the orthogonal groups $O_{2n}$ and $O_{2n+1}$ to the common group $\ZZ^n \rtimes S^*_{2n}$,
where $S^*_{2n} \subset S_{2n}$ is the subgroup of permutations $\sigma$ satisfying $\sigma(i^*) = \sigma(i)^*$ for all $i \in\{1,\dotsc,2n\}$.  The group $S_{2n}^*$ decomposes as a semidirect product $\{\pm 1\}^n \rtimes
S_n$, and here $S_n$ and $\{\pm 1\}^n$ act respectively on $\ZZ^n$ by permuting entries and by factorwise multiplication.  We shall adopt the separate notations $\wt W_{2n}$ and $\wt W_{2n+1}$ for the Iwahori-Weyl group in the even and odd cases, respectively; although these are identified as groups, they will carry different Bruhat orders.

In the even case, inside $\wt W_{2n}$ is the Iwahori-Weyl group $\wt W_{2n}^\circ$ of $SO_{2n}$,
\[
   \wt W_{2n}^\circ := \ZZ^n \rtimes S^\circ_{2n},
\]
where $S^\circ_{2n} \subset S^*_{2n}$ is the subgroup of elements which are even in $S_{2n}$.  The \emph{affine Weyl group $W_{\aff,2n}$} is the subgroup
\[
   W_{\aff,2n} := Q_{2n}^\vee \rtimes S_{2n}^\circ,
\]
where
\begin{equation}\label{disp:Q_2n^vee}\textstyle
   Q_{2n}^\vee := \bigl\{\, (x_1,\dotsc,x_n) \in \ZZ^n \bigm| \sum_{i=1}^n x_i \text{ is even} \,\bigr\}
\end{equation}
is the coroot lattice.  The group $\wt W_{2n}^\circ$ is the extended affine Weyl group attached to the root datum of $SO_{2n}$ in the sense of \s\ref{ss:defs}.

The odd case is a little simpler:  $O_{2n+1}$ and $SO_{2n+1}$ have common Iwahori-Weyl group $\wt W_{2n+1}$, and this is the extended affine Weyl group attached to the root datum of $SO_{2n+1}$ in the sense of \s\ref{ss:defs}.  The \emph{affine Weyl group $W_{\aff,2n+1}$} is the subgroup
\[
   W_{\aff,2n+1} := Q_{2n+1}^\vee \rtimes S_{2n}^*,
\]
where $Q_{2n+1}^\vee$ is defined just as in the even case \eqref{disp:Q_2n^vee}.

\subsection{Roots}
The \emph{roots} of $SO_{2n}$ consist of the maps
\[
   (x_1,\dotsc,x_n) \mapsto \pm x_i \pm x_j
   \quad\text{for}\quad
   1 \leq i < j \leq n.
\]
We take the $n$ roots
\[
   (x_1,\dotsc,x_n) \mapsto
   \begin{cases}
      x_i - x_{i+1}, & 1 \leq i \leq n-1;\\
      x_{n-1} + x_n
   \end{cases}
\]
as simple roots.

The \emph{roots} of $SO_{2n+1}$ consist of the maps
\[
   (x_1,\dotsc,x_n) \mapsto 
   \begin{cases}
      \pm x_i,  &  1 \leq i \leq n;\\
      \pm x_i \pm x_j,  &  1 \leq i < j \leq n.
   \end{cases}
\]
We take the $n$ roots
\[
   (x_1,\dotsc,x_n) \mapsto
   \begin{cases}
      x_i - x_{i+1}, & 1 \leq i \leq n-1;\\
      x_n
   \end{cases}
\]
as simple roots.

\subsection{Base alcoves}
As in \s\ref{ss:defs}, our choices of simple roots determine base alcoves.  For $SO_{2n}$, the base alcove $A$ is the alcove in $\RR^n$ with $n+1$ vertices
\begin{align*}
   a_0 &:= (0,\dotsc,0),\\
   a_{0'} &:= (1,0^{(n-1)}),\\
   a_i &:= \bigl((\tfrac 1 2)^{(i)},0^{(n-i)}\bigr),\quad 2 \leq i \leq n-2,\\
   a_n &:= \bigl(\tfrac 1 2,\dotsc,\tfrac 1 2\bigr),\\
   a_{n'} &:= \bigl((\tfrac 1 2)^{(n-1)},-\tfrac 1 2\bigr).
\end{align*}
For $SO_{2n+1}$, the base alcove is the alcove in $\RR^n$ with $n+1$ vertices
\[
   (0,\dotsc,0),\quad (1,0^{(n-1)}),\quad \text{and}\quad \bigl((\tfrac 1 2)^{(i)},0^{(n-i)}\bigr)\quad \text{for}\quad  2 \leq i \leq n.
\]

\subsection{Bruhat order}
The group $\wt W_{2n+1}$ is endowed with a Bruhat order as described for any extended affine Weyl group in \s\ref{ss:defs}.  In the even case, although $\wt W_{2n}$ is not the extended affine Weyl group attached to a root datum, we still get a Bruhat order on it in the way described in \s\ref{ss:defs}:  $\wt W_{2n}$ decomposes as a semidirect product $W_{\aff,2n} \rtimes \Omega$, where $\Omega$ denotes the stabilizer of $A$ in $\wt W_{2n}$, and everything carries over as usual.

\section{Permissibility for even orthogonal groups}\label{s:perm}
In this section we translate the notion of $\mu$-permissibility in $\wt W_{2n}$ into a combinatorial notion expressible in terms of extended alcoves.

\subsection{Convex hull}
Recall the cocharacter $\mu$ from \eqref{disp:mu}.  The relevant convex hull that comes up in the definition of $\mu$-permissibility is $\Conv(S_{2n}^\circ \mu)$, which one readily verifies to equal
\begin{equation}\label{disp:conv_hull}\textstyle
   \bigl\{\, (x_1,\dotsc,x_n) \in \RR^n \bigm|  \sum_{i=1}^n |x_i| \leq 1 \,\bigr\}.
\end{equation}

\subsection{Extended alcoves}\label{ss:extd_alcs}
We next recall the notion of extended alcove of Kottwitz and Rapoport  \cite{kottrap00}; see also \cite{sm09a}.  We'll choose our base alcove so that we follow the conventions in \cite{kottrap00}, which are ``opposite'' to those in \cite{sm09a}.  

To define extended alcoves, we'll make use of the Iwahori-Weyl group $\wt W_{GL_{2n}}$ of $GL_{2n}$,
\[
   \wt W_{GL_{2n}} := \ZZ^{2n} \rtimes S_{2n},
\]
where $S_{2n}$ acts on $\ZZ^{2n}$ by permuting entries.  There is a natural embedding $\wt W_{2n} \inj \wt W_{GL_{2n}}$ defined on translation elements by
\begin{equation}\label{disp:cochar_emb}
   (x_1,\dotsc,x_n) \mapsto (x_1,\dotsc,x_n,-x_n,\dotsc,-x_1)
\end{equation}
and on finite Weyl group elements by the inclusion $S_{2n}^* \subset S_{2n}$.  For the rest of this section and \s\ref{s:thm_even}, we shall often tacitly identify elements and subsets of $\RR^n$ with their image in $\RR^{2n}$ via the map \eqref{disp:cochar_emb}.

An \emph{extended alcove} for $\wt W_{2n}$ is a sequence $v_0,\dotsc$, $v_{2n-1}$ of elements in $\ZZ^{2n}$ such that, putting $v_{2n} := v_0 + \mathbf 1$,
\begin{enumerate}
\renewcommand{\theenumi}{A\arabic{enumi}}
\item
   $v_0 \leq v_1 \leq \dotsb \leq v_{2n}$;
\item
   $\Sigma v_k = \Sigma v_{k-1} + 1$ for all $1 \leq k \leq 2n$; and
\item\label{it:dual_cond}
   $v_k + v_{2n-k}^* = \mathbf 1$ for all $1 \leq k \leq 2n$.
\end{enumerate}
Note that condition \eqref{it:dual_cond} requires $v_0$ to lie in the image of \eqref{disp:cochar_emb}.  The sequence of elements $\omega_k := (1^{(k)},0^{(2n-k)})$ is an extended alcove, which we call the \emph{standard extended alcove}.  The group $\wt W_{2n}$ acts simply transitively on extended alcoves via its embedding into $\wt W_{GL_{2n}}$, and we identify $\wt W_{2n}$ with the set of extended alcoves by taking the standard extended alcove as base point.

\subsection{The vector \texorpdfstring{$\mu_k$}{mu-k}}
Given an extended alcove $v_0,\dotsc$, $v_{2n-1}$ for $\wt W_{2n}$, let
\begin{equation}\label{disp:mu_k}
   \mu_k := v_k - \omega_k, \quad 0 \leq k \leq 2n.
\end{equation}
Then $\Sigma \mu_k = 0$, we have the periodicity relation $\mu_0 = \mu_{2n}$, and condition \eqref{it:dual_cond} is equivalent to
\begin{equation}\label{disp:mu_dlty_cond}
   \mu_k + \mu_{2n-k}^* = \mathbf 0 \quad \text{for all}\quad 0 \leq k \leq 2n.
\end{equation}

Note that if the extended alcove corresponds to the element $w \in \wt W_{2n}$ with linear part $\sigma \in S_{2n}^*$, then
\begin{equation}\label{disp:mu_recursion}
   \mu_{k} = \mu_{k-1} + e_{\sigma(k)} - e_k \quad\text{for all}\quad 1 \leq k \leq 2n,
\end{equation}
where $e_1,\dotsc$, $e_{2n}$ denotes the standard basis of $\ZZ^{2n}$.

\subsection{Basic inequalities}
A priori, the $\mu_k$'s of the previous subsection are not directly suited to questions of permissibility in $\wt W_{2n}$, since $\omega_k$ is not a vertex of $A$ for $k \neq 0$ (and indeed is not even in the image of \eqref{disp:cochar_emb} for such $k$).  We'll use the following basic inequalities to help forge a better link.  For $0 \leq k \leq n$, let
\[
   A_k := \{1,2,\dotsc,k,k^*,k^*+1,\dotsc,2n\}
   \quad\text{and}\quad
   B_k := \{k+1,k+2,\dotsc,2n-k\}.
\]

\begin{lem}\label{st:basic_ineqs}
For $0 \leq k \leq n$, we have
\[
   -1 \leq \mu_k(i) + \mu_k(i^*) \leq 0 \quad\text{for}\quad i \in A_k
\]
and
\[
   0 \leq \mu_k(i) + \mu_k(i^*) \leq 1 \quad\text{for}\quad i \in B_k.
\]
\end{lem}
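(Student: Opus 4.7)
The plan is to use the duality identity satisfied by the $\mu_j$'s together with the monotonicity of the sequence $v_0, v_1, \ldots, v_{2n}$ to reduce the inequalities to a simple case analysis.

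First, I would translate the symmetry condition \eqref{disp:mu_dlty_cond} into a more usable form. Replacing $k$ by $2n-k$ in $\mu_k + \mu_{2n-k}^* = \mathbf{0}$ gives $\mu_k^* = -\mu_{2n-k}$, that is, $\mu_k(i^*) = -\mu_{2n-k}(i)$ for every $i$. Hence
\[
   \mu_k(i) + \mu_k(i^*) = \mu_k(i) - \mu_{2n-k}(i).
\]
Unpacking the definition $\mu_j = v_j - \omega_j$, this rewrites as
\[
   \mu_k(i) + \mu_k(i^*)
   = \bigl(v_k(i) - v_{2n-k}(i)\bigr) + \bigl(\omega_{2n-k}(i) - \omega_k(i)\bigr).
\]

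Next I would bound the $v$-term. Since $0 \leq k \leq n \leq 2n-k \leq 2n$, the monotonicity axiom together with $v_{2n} = v_0 + \mathbf 1$ yields the chain $v_k \leq v_{2n-k} \leq v_{2n} = v_0 + \mathbf 1 \leq v_k + \mathbf 1$, and therefore
\[
   -1 \leq v_k(i) - v_{2n-k}(i) \leq 0
\]
for every $i$.

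Finally I would compute the $\omega$-correction by cases, using that $\omega_j(i) = 1$ iff $i \leq j$. For $i \in A_k$ with $i \leq k$, both $\omega_k(i)$ and $\omega_{2n-k}(i)$ equal $1$; for $i \in A_k$ with $i \geq k^* = 2n+1-k$, both equal $0$; so in either subcase $\omega_{2n-k}(i) - \omega_k(i) = 0$, giving $-1 \leq \mu_k(i)+\mu_k(i^*) \leq 0$. For $i \in B_k$, i.e. $k < i \leq 2n-k$, we have $\omega_k(i) = 0$ and $\omega_{2n-k}(i) = 1$, so the correction is $+1$ and we get $0 \leq \mu_k(i)+\mu_k(i^*) \leq 1$.

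There is really no substantive obstacle here beyond keeping the indices $i$, $i^*$, $k$, and $2n-k$ straight; the whole argument is the observation that the duality condition converts the symmetric sum $\mu_k(i) + \mu_k(i^*)$ into a difference $v_k(i) - v_{2n-k}(i)$ (modulo an explicit $\omega$-correction), after which the monotonicity axiom does all the work.
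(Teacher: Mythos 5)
Your proof is correct and follows essentially the same route as the paper's: both arguments use the duality identity to rewrite $\mu_k(i)+\mu_k(i^*)$ as $\mu_k(i)-\mu_{2n-k}(i)$ and then bound that difference; the paper derives the bound from the recursion $\mu_k = \mu_{k-1} + e_{\sigma(k)} - e_k$ and the fact that $\sigma$ is a permutation, whereas you derive the same bound directly from the alcove axioms (monotonicity of the $v_j$'s and periodicity) together with an explicit $\omega$-correction, which is an equivalent but somewhat more elementary packaging of the same fact.
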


\begin{proof}
Let $\sigma \in S_{2n}^*$ denote the linear part of $w$.  Since $\sigma$ is a permutation, it follows from \eqref{disp:mu_recursion} that
\[
   \mu_k(i) \leq \mu_{2n-k}(i) \leq \mu_k(i) + 1 \quad\text{if}\quad i \in A_k
\]
and
\[
   \mu_k(i) \geq \mu_{2n-k}(i) \geq \mu_k(i) - 1 \quad\text{if}\quad i \in B_k.
\]
Now use $\mu_{2n-k}(i) = - \mu_k(i^*)$ \eqref{disp:mu_dlty_cond} in each case to obtain the asserted inequalities.
\end{proof}

\subsection{The vector \texorpdfstring{$\nu_k$}{nu-k}}
For $0 \leq k \leq 2n$, let
\begin{equation}\label{disp:nu_k}
   \nu_k := \frac{\mu_k + \mu_{2n-k}}2.
\end{equation}
Then $\nu_k \in \frac 1 2 \ZZ^{2n}$, $\nu_k = \nu_{2n-k}$, and
\[
   \nu_0 = \mu_0 \quad\text{and}\quad \nu_n = \mu_n.
\]
Moreover, \eqref{disp:mu_dlty_cond} allows us to also write
\begin{equation}\label{disp:nu_k_mu_k}
   \nu_k = \frac{\mu_k - \mu_k^*}2.
\end{equation}
Note that, since $\mu_k$ has integer entries, \eqref{disp:nu_k_mu_k} and the basic inequalities \eqref{st:basic_ineqs} allow us to recover $\mu_k$ uniquely from $\nu_k$.

The $\nu_k$'s are well-suited to questions of permissibility.  Indeed, let us say that that $\nu_k$ arises from the element $w \in \wt W_{2n}$, and let
\begin{equation}\label{disp:a_k'}
   a_k' := \frac{\omega_k + \omega_{2n-k} - \mathbf 1}2 = \bigl((\tfrac 1 2)^{(k)}, 0^{(2n-2k)},(-\tfrac 1 2)^{(k)}\bigr), \quad 0 \leq k \leq n.
\end{equation}
Then $a_k'$ equals the vertex $a_k$ for $k \neq 1$, $n-1$; $a_k'$ is contained in the closure of $A$ for every $k$; and
\begin{equation}\label{disp:wa-a_nu_k}
    \nu_k = wa_k' - a_k'.
\end{equation}

We also note that the recursion relation \eqref{disp:mu_recursion} for the $\mu_k$'s and \eqref{disp:nu_k_mu_k} yield the recursion relation
\begin{equation}\label{disp:nu_recursion}
   \nu_k = \nu_{k-1} + \frac{e_{\sigma(k)}-e_k+e_{k^*}-e_{\sigma(k)^*}}2,
\end{equation}
with the notation as in \eqref{disp:mu_recursion}.

\subsection{\texorpdfstring{$\mu$}{mu}-permissibility}
We now translate the condition of $\mu$-permissibility into a combinatorial condition on extended alcoves.  Given $w \in \wt W_{2n}$, we write $\sigma \in S_{2n}^*$ for its linear part; $v_0,\dotsc$, $v_{2n-1}$ for its extended alcove; and $\mu_k$ and $\nu_k$ for the vectors \eqref{disp:mu_k} and \eqref{disp:nu_k}, respectively.  We begin with a lemma.

\begin{lem}\label{st:sigma_even}
$\sigma$ is even $\iff$ $\mu_0 \equiv \mu_n \bmod Q^\vee_{2n}$.
\end{lem}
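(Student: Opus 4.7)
The plan is to unwind both sides of the asserted equivalence into a statement about the parity of a single integer associated with $\sigma$, then match them. First I would telescope the recursion \eqref{disp:mu_recursion} over $k = 1, \dotsc, n$ to obtain
\[
   \mu_n - \mu_0 = \sum_{k=1}^n (e_{\sigma(k)} - e_k).
\]
Both $\mu_0$ and $\mu_n$ lie in the image of $\ZZ^n \hookrightarrow \ZZ^{2n}$ under the embedding \eqref{disp:cochar_emb}: for $\mu_0 = v_0$ this follows from applying condition \eqref{it:dual_cond} with $k = 2n$, and for $\mu_n$ it follows by combining condition \eqref{it:dual_cond} with $k = n$ and the identity $\omega_n(i) + \omega_n(i^*) = 1$. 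Hence $\mu_n - \mu_0$ corresponds to a well-defined vector in $\ZZ^n$, and the condition $\mu_0 \equiv \mu_n \bmod Q^\vee_{2n}$ is, by the definition \eqref{disp:Q_2n^vee}, the condition that the sum of its first $n$ entries be even.

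Next I would compute that sum directly. The $k$th summand $e_{\sigma(k)} - e_k$ contributes $-1$ to the first $n$ entries if $\sigma(k) > n$ and $0$ if $\sigma(k) \leq n$ (using $1 \leq k \leq n$). Setting
\[
   r := \#\{\, k \in \{1,\dotsc,n\} \mid \sigma(k) > n \,\},
\]
one obtains that the sum of the first $n$ coordinates of $\mu_n - \mu_0$ equals $-r$. Thus $\mu_0 \equiv \mu_n \bmod Q^\vee_{2n}$ if and only if $r$ is even.

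It remains to show that $\sigma$ is even in $S_{2n}$ if and only if $r$ is even. For this I would use the semidirect product decomposition $S_{2n}^* = \{\pm 1\}^n \rtimes S_n$ and write $\sigma = (\epsilon, \tau)$, where $r$ is exactly the number of $-1$'s among the coordinates of $\epsilon$. A permutation $\tau \in S_n$ acting by permuting the blocks $\{i, i^*\}$ contributes, for each transposition in a reduced expression, \emph{two} transpositions in $S_{2n}$ (one on each of the intervals $\{1,\dotsc,n\}$ and $\{n+1,\dotsc,2n\}$), which is always an even permutation. Meanwhile, each sign change (a $-1$ in $\epsilon$, corresponding to swapping some pair $\{i, i^*\}$) contributes a single transposition in $S_{2n}$. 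Hence $\operatorname{sgn}(\sigma) = (-1)^r$, and $\sigma$ is even iff $r$ is even.

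The main step is the last one, and the potential pitfall is simply being careful about how one writes the isomorphism $S_{2n}^* \cong \{\pm 1\}^n \rtimes S_n$ as an embedding into $S_{2n}$ so that the sign count comes out cleanly; once the right description is fixed, the parity computation is immediate.
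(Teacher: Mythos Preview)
Your proof is correct and follows essentially the same approach as the paper. The paper's proof simply states the two ingredients---that $\sigma$ is even iff $r$ is even, and the telescoped recursion $\mu_n = \mu_0 + \sum_{i=1}^n e_{\sigma(i)} - \sum_{i=1}^n e_i$---and leaves to the reader the verifications you spell out (that $\mu_0$, $\mu_n$ lie in the image of \eqref{disp:cochar_emb}, the coordinate-sum computation, and the sign computation via the semidirect product decomposition).
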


\begin{proof}
Combine the facts that $\sigma$ is even or odd according as
\[
   \#\bigl\{\, i \in \{1,\dotsc,n\} \mid \sigma(i) > n \,\bigr\}
\]
is even or odd, and that, by \eqref{disp:mu_recursion},
\[
   \mu_n = \mu_0 + \sum_{i=1}^n e_{\sigma(i)} - \sum_{i=1}^n e_i.\qedhere
\]
\end{proof}


We are now ready for the main result of \s\ref{s:perm}.  Via \eqref{disp:cochar_emb}, we identify $\mu$ with
\[
   (1,0^{(2n-2)},-1) \in \ZZ^{2n}.
\]
Our statement involves the Weyl orbits $S_{2n}^\circ \mu = S_{2n}^*\mu$ and $S_{2n}\mu$ in $\ZZ^{2n}$. Note that the first of these is contained in the image of the map \eqref{disp:cochar_emb}, whereas the second is not.

\begin{prop}\label{st:comb_perm}
The following are equivalent.
\begin{enumerate}
\item\label{it:w_mu-perm}
   $w$ is $\mu$-permissible.
\item\label{it:nu_k_perm_cond}
   $\nu_k \in \Conv(S_{2n}^\circ\mu)$ for all $0 \leq k \leq n$, and furthermore $\nu_0$, $\nu_n \in S_{2n}^\circ \mu$.
\item\label{it:mu_k_perm_cond}
   $\mu_k \in S_{2n}\mu \cup\{\mathbf 0\}$ for all $0 \leq k \leq 2n$, and furthermore $\mu_0$, $\mu_n \in S_{2n}^\circ \mu$.
\end{enumerate}
\end{prop}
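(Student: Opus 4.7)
The plan is to prove the cycle of implications $(1) \Rightarrow (2) \Rightarrow (3) \Rightarrow (1)$.

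For $(1) \Rightarrow (2)$: Since each $a_k'$ lies in $\overline A$ (as noted after \eqref{disp:a_k'}), the identity $\nu_k = wa_k' - a_k'$ and $\mu$-permissibility give $\nu_k \in \Conv(S_{2n}^\circ\mu)$ for all $0 \leq k \leq n$. For $k \in \{0, n\}$ we further have $\nu_k = \mu_k \in \ZZ^{2n}$, and an inspection of \eqref{disp:conv_hull} shows the only integer vectors of $\Conv(S_{2n}^\circ\mu)$ lying in the image of \eqref{disp:cochar_emb} are $\mathbf 0$ and the elements of $S_{2n}^\circ\mu$. The condition $w \equiv t_\mu \bmod W_{\aff,2n}$ forces $\tau \in \mu + Q_{2n}^\vee$ and $\sigma \in S_{2n}^\circ$, hence $\mu_0 = \tau \neq \mathbf 0$ (as $\Sigma\mu = 1$ is odd); Lemma~\eqref{st:sigma_even} then gives $\mu_n \equiv \mu_0 \bmod Q_{2n}^\vee$, so $\mu_n \neq \mathbf 0$ as well.

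For $(2) \Rightarrow (3)$: When $k \in \{0, n\}$, $\mu_k = \nu_k \in S_{2n}^\circ\mu$ directly. For $0 < k < n$, the conditions $\nu_k \in \Conv(S_{2n}^\circ\mu)$ and $\nu_k \in \tfrac 1 2\ZZ^{2n}$ leave only four patterns for the first $n$ entries of $\nu_k$: all zero; a single entry of absolute value $1$; a single entry of absolute value $\tfrac 1 2$; or two entries of absolute value $\tfrac 1 2$. Recovering $\mu_k$ from $\nu_k$ via the basic inequalities \eqref{st:basic_ineqs} and using $\Sigma\mu_k = 0$, the first two yield $\mu_k = \mathbf 0$ or $\mu_k \in S_{2n}^\circ\mu$; the third is incompatible with $\Sigma\mu_k = 0$ and cannot arise; and the fourth produces a vector in $S_{2n}\mu$ having a single $+1$ and $-1$ at positions that need not be dual.

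For $(3) \Rightarrow (1)$: The permissibility condition $wa - a \in \Conv(S_{2n}^\circ\mu)$ is verified at each vertex of $A$. The vertices $a_0, a_2,\dotsc,a_{n-2}, a_n$ coincide with the corresponding $a_k'$, and a short direct computation using $\mu_k \in S_{2n}\mu \cup \{\mathbf 0\}$ gives $\nu_k = wa_k' - a_k' \in \Conv(S_{2n}^\circ\mu)$. For the two remaining vertices, the affinity of $a \mapsto wa - a$ combined with $a_{0'} = 2a_1' - a_0$ and $a_{n'} = 2a_{n-1}' - a_n$ yields
\[
   wa_{0'} - a_{0'} = 2\nu_1 - \nu_0 = \mu_1 - \mu_1^* - \mu_0, \qquad
   wa_{n'} - a_{n'} = \mu_{n-1} - \mu_{n-1}^* - \mu_n,
\]
and the crux is to verify each lies in $\Conv(S_{2n}^\circ\mu)$ (in fact in $S_{2n}^\circ\mu \cup \{\mathbf 0\}$). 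Writing $\mu_0 = e_i - e_{i^*}$, the rank-one recursion \eqref{disp:mu_recursion} $\mu_1 - \mu_0 = e_{\sigma(1)} - e_1$ together with the constraint $\mu_1 \in S_{2n}\mu \cup \{\mathbf 0\}$ forces $(\mu_0, \mu_1)$ into a short list of patterns, distinguished by which of $\sigma(1) = 1$, $i = 1$, or $\sigma(1) = i^*$ occurs; substitution in each verifies the claim, and the check at $a_{n'}$ is symmetric. Finally, $w \equiv t_\mu \bmod W_{\aff,2n}$ is immediate from $\mu_0, \mu_n \in S_{2n}^\circ\mu$ together with Lemma~\eqref{st:sigma_even}.

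The principal obstacle is the case analysis at the vertices $a_{0'}$ and $a_{n'}$ in the $(3) \Rightarrow (1)$ step: the vectors $a_k'$ in \eqref{disp:a_k'} were chosen precisely to coincide with the vertices of $A$ \emph{except} at $k = 1, n-1$ (where they are replaced by midpoints of edges), so the final permissibility check at these two ``off-diagonal'' vertices cannot be read off directly from~(2) and instead must be extracted from the rigidity of the one-step recursion \eqref{disp:mu_recursion}.
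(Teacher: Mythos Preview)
Your argument is correct and follows essentially the same approach as the paper's. The paper organizes the equivalences as $(2)\Leftrightarrow(3)$ and $(1)\Leftrightarrow(2)$ rather than your cycle $(1)\Rightarrow(2)\Rightarrow(3)\Rightarrow(1)$, but the substantive content is the same: the only nontrivial step is handling the two exceptional vertices $a_{0'}$ and $a_{n'}$, and both you and the paper reduce this to the observation that the constraint on $\mu_1$ (equivalently $\nu_1$) forces one of $\sigma(1)=1$, $i=1$, or $\sigma(1)=i^*$, after which the containment is checked case by case. Your formula $wa_{0'}-a_{0'}=2\nu_1-\nu_0$ via the affine relation $a_{0'}=2a_1'-a_0$ is a clean way to obtain the paper's expression $\nu_0 + e_{\sigma(1)} - e_1 + e_{2n} - e_{\sigma(2n)}$. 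One small point: in your $(1)\Rightarrow(2)$ step you write ``$\tau$'' without defining it; presumably you mean the translation part $\mu_0$ of $w$, and the parity argument (``$\Sigma\mu=1$ is odd'') should be read in $\ZZ^n$ before the embedding~\eqref{disp:cochar_emb}.
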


\begin{proof}
The equivalence \eqref{it:nu_k_perm_cond} $\Longleftrightarrow$ \eqref{it:mu_k_perm_cond} follows from the explicit expression \eqref{disp:conv_hull} for $\Conv(S_{2n}^\circ\mu)$, the basic inequalities \eqref{st:basic_ineqs}, and \eqref{disp:nu_k_mu_k}.

We now turn to \eqref{it:w_mu-perm} $\Longleftrightarrow$ \eqref{it:nu_k_perm_cond}.  First note that we have equivalences
\begin{equation}\label{disp:equivs}
\begin{aligned}
   w \equiv t_\mu \bmod W_{\aff,2n}
      & \iff \text{$\sigma$ is even and $\nu_0 \equiv \mu \bmod Q^\vee_{2n}$}\\
      & \iff \nu_0 \equiv \nu_n \equiv \mu \bmod Q^\vee_{2n},
\end{aligned}
\end{equation}
where the second equivalence uses \eqref{st:sigma_even}.  This, combined with \eqref{disp:wa-a_nu_k}, renders the implication \eqref{it:w_mu-perm} $\Longrightarrow$ \eqref{it:nu_k_perm_cond} transparent.

Conversely, suppose \eqref{it:nu_k_perm_cond}.  Then $w \equiv t_\mu \bmod W_{\aff,2n}$ by \eqref{disp:equivs}, and for each vertex $a_k$ of $A$ with $k = 0$, $2$, $3,\dotsc$, $n-2$, $n$,
\[
   wa_k - a_k = wa_k' - a_k' = \nu_k \in \Conv(S^\circ_{2n}\mu).
\]
So it remains to show $wa_k - a_k \in \Conv(S^\circ_{2n}\mu)$ for $k = 0'$, $n'$.

For $k = 0'$, it follows from the definitions that
\begin{align*}
   wa_{0'} - a_{0'} &= \nu_0 + e_{\sigma(1)} - e_1 + e_{2n} - e_{\sigma(2n)},\\
   \nu_1 &= \nu_0 + \frac{e_{\sigma(1)} - e_1 + e_{2n} - e_{\sigma(2n)}}2.
\end{align*}
By assumption,  $\nu_0 \in S_{2n}^\circ \mu$ is of the form $e_j - e_{j^*}$ for some $j \in \{1,\dotsc,2n\}$.  The containment $\nu_1 \in \Conv(S^\circ_{2n}\mu)$ and the explicit expression for $\Conv(S^\circ_{2n}\mu)$ then force $\sigma(1) = 1$, $j = 1$, or $j = \sigma(2n)$.  In each case we visibly have $wa_{0'} - a_{0'} \in \Conv(S^\circ_{2n}\mu)$.

The argument for $k = n'$ goes similarly, with $\nu_n$ in the role of $\nu_0$ and $\nu_{n-1}$ in the role of $\nu_1$.
\end{proof}

\begin{rk}\label{rk:perm_nu}
It follows from \eqref{disp:nu_k_mu_k} and condition \eqref{it:mu_k_perm_cond} in the proposition that when $w$ is $\mu$-permissible, $\nu_k$ is of the form $\frac 1 2 (e_i - e_{j} + e_{j^*} - e_{i^*})$ for some $i$, $j \in \{1,\dotsc,2n\}$.
\end{rk}

\subsection{Remarks on the odd case}
Essentially everything in this section carries over to the odd Iwahori-Weyl group $\wt W_{2n+1}$ as well.  In particular, the notions of extended alcove, $\mu_k$, and $\nu_k$ are defined in exactly the same way (after all, $\wt W_{2n+1}$ and $\wt W_{2n}$ have the same underlying group), and one obtains the following analogue of \eqref{st:comb_perm}.

\begin{prop}
Let $w \in \wt W _{2n+1}$.  The following are equivalent.
\begin{enumerate}
\item
   $w$ is $\mu$-permissible in $\wt W_{2n+1}$.
\item
   $\nu_k \in \Conv(S_{2n}^*\mu)$ for all $0 \leq k \leq n$, and futhermore $\nu_0 \in S_{2n}^* \mu$.
\item
   $\mu_k \in S_{2n}\mu \cup\{\mathbf 0\}$ for all $0 \leq k \leq 2n$, and furthermore $\mu_0 \in S_{2n}^* \mu$.\qed
\end{enumerate}
\end{prop}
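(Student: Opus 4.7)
The plan is to follow the proof of Proposition~\ref{st:comb_perm} with minor modifications appropriate to the odd setting, exploiting the fact that $\wt W_{2n+1}$ is genuinely the extended affine Weyl group of $SO_{2n+1}$ in the sense of \S\ref{ss:defs}, with $W_{\aff,2n+1} = Q_{2n+1}^\vee \rtimes S_{2n}^*$. Since every element of $\wt W_{2n+1}$ already has linear part in $S_{2n}^*$, the congruence $w \equiv t_\mu \bmod W_{\aff,2n+1}$ reduces simply to $\mu_0 \equiv \mu \bmod Q_{2n+1}^\vee$ with no parity constraint, and no analogue of Lemma~\ref{st:sigma_even} will be needed.

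First I would dispatch the equivalence (ii)~$\Leftrightarrow$~(iii), which is identical to its even-case counterpart: the orbit $S_{2n}^*\mu$ in $\ZZ^{2n}$ equals $\{e_i - e_{i^*} : 1 \leq i \leq 2n\} = S_{2n}^\circ\mu$, so $\Conv(S_{2n}^*\mu)$ is again described by \eqref{disp:conv_hull}, and the argument via the basic inequalities of Lemma~\ref{st:basic_ineqs} and the identity \eqref{disp:nu_k_mu_k} transfers verbatim.  Along the way I would note that the integer points of $\Conv(S_{2n}^*\mu)$ are exactly $\{\mathbf 0\} \cup S_{2n}^*\mu$; since $\mathbf 0 \in Q_{2n+1}^\vee$ while $\mu \notin Q_{2n+1}^\vee$, the congruence on $\nu_0$ together with $\nu_0 = \mu_0 \in \Conv(S_{2n}^*\mu) \cap \ZZ^{2n}$ is equivalent to $\nu_0 \in S_{2n}^*\mu$, as required for the translation-part condition in (ii).

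For (i)~$\Leftrightarrow$~(ii), the base alcove of $SO_{2n+1}$ has vertices $a_0, a_{0'} := (1,0^{(n-1)}), a_2, a_3, \dotsc, a_n$, of which all but $a_{0'}$ coincide with the vectors $a_k'$ of \eqref{disp:a_k'}; in particular there is no vertex $a_{n'}$.  By \eqref{disp:wa-a_nu_k}, the $\mu$-permissibility condition at every vertex other than $a_{0'}$ reads exactly $\nu_k \in \Conv(S_{2n}^*\mu)$, so (i)~$\Rightarrow$~(ii) will be immediate.  For the converse, the only remaining step is to verify $wa_{0'} - a_{0'} \in \Conv(S_{2n}^*\mu)$, which I would handle by the same calculation used at $k = 0'$ in the proof of Proposition~\ref{st:comb_perm}:  one writes $wa_{0'} - a_{0'} = 2\nu_1 - \nu_0$, and using $\nu_0 = e_j - e_{j^*} \in S_{2n}^*\mu$ together with $\nu_1 \in \Conv(S_{2n}^*\mu)$, the explicit form \eqref{disp:conv_hull} forces one of $\sigma(1) = 1$, $j = 1$, or $j = \sigma(2n)$, and the required containment follows in each case.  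The main obstacle, such as it is, is therefore this single explicit verification at $a_{0'}$; the absence of an $a_{n'}$ vertex in the odd case means the proof is strictly shorter than its even-case analogue.
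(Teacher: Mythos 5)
Your proposal is correct and follows precisely the approach the paper intends: one mirrors the even-case proof of Proposition~\ref{st:comb_perm}, using the same equivalence (ii)~$\Leftrightarrow$~(iii) via the basic inequalities \eqref{st:basic_ineqs} and \eqref{disp:nu_k_mu_k}, the simplified congruence condition (since the linear part already lies in $S_{2n}^*$, so no analogue of Lemma~\ref{st:sigma_even} is required), the identity $\nu_k = wa_k' - a_k'$ at the vertices $a_0, a_2, \dotsc, a_n$, and the single explicit check at $a_{0'}$ via $wa_{0'} - a_{0'} = 2\nu_1 - \nu_0$. One small point: in (i)~$\Rightarrow$~(ii) you account only for the $\nu_k$ arising from vertices, but $\nu_1 \in \Conv(S_{2n}^*\mu)$ is also needed; this follows at once because $a_1'$ is the midpoint of $a_0$ and $a_{0'}$ and hence lies in $\ol A$ (exactly the role played by the remark ``$a_k'$ is contained in the closure of $A$ for every $k$'' in the even case).
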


Since we won't actually need this proposition later on, we leave the details to the reader.

\section{Main theorem for even orthogonal groups}\label{s:thm_even}
In this section we prove the Main Theorem from the Introduction for even orthogonal groups.  The broad strategy of our proof is the same as that of Kottwitz and Rapoport \cite{kottrap00}; see also \cite{sm09a}.  Namely, since $\mu$-admissibility implies $\mu$-permissibility for any cocharacter $\mu$ in any extended affine Weyl group, we must show that the converse holds for our particular cocharacter $\mu$, which we continue to identify with $(1,0^{(2n-2)},-1)$ via \eqref{disp:cochar_emb}.  For this, let $w \in \wt W_{2n}$ be $\mu$-permissible.  If $w$ is a translation element, then necessarily $w$ is an $S_{2n}^\circ$-conjugate of $t_\mu$ and there is nothing further to do.  So we might as well assume $w$ is not a translation element.  In this case, our task is to find an affine root $\wt\alpha$ such that, for the associated reflection $s_{\wt\alpha} \in W_{\aff,2n}$, $s_{\wt\alpha} w$ is again $\mu$-permissible and $w < s_{\wt\alpha} w$ in the Bruhat order.  Repeating the argument as needed, we then obtain a chain $w < s_{\wt\alpha} w < \dotsb$ of $\mu$-permissible elements which must terminate in a translation element since $\Perm(\mu)$ is manifestly a finite set.

We shall spend the subsections \ref{ss:refl}--\ref{ss:refl_bo} laying the groundwork to implement our strategy, with the proof proper coming in \s\ref{ss:pf_even}.

\subsection{Reflections}\label{ss:refl}
Let us write $X_*$ for the image of $\ZZ^n$ in $\ZZ^{2n}$ under the map \eqref{disp:cochar_emb}, and consider the affine linear function
\[
   \wt\alpha_{i,j;d}\colon
   \xymatrix@R=0ex{
      X_* \ar[r] & \ZZ\vphantom{X_*}\\
      (x_1,\dotsc,x_{2n})\vphantom{x_i - x_j - d} \ar@{|->}[r] & x_i - x_j - d \vphantom{(x_1,\dotsc,x_{2n})}
   }
\]
for $i$, $j \in \{1,\dotsc,2n\}$ with $i < j$ and $d\in\ZZ$.  Then $\wt\alpha := \wt\alpha_{i,j;d}$ is an affine root of $SO_{2n}$ precisely when $j \neq i^*$, and up to sign, all affine roots are obtained in this way.  Plainly $\wt\alpha_{i,j;d} = \wt\alpha_{j^*,i^*;d}$.  Attached to $\wt\alpha$ is its linear part $\alpha:= \wt\alpha_{i,j;0}$; the coroot $\alpha^\vee = e_i - e_j + e_{j^*} - e_{i^*}$, where $e_1,\dotsc,$ $e_{2n}$ denotes the standard basis in $\ZZ^{2n}$; and the reflection $s_{\wt\alpha} = s_{i,j;d} \in W_{\aff,2n}$ which acts on $X_* \tensor_\ZZ \RR$ by the rule $s_{\wt\alpha} x = x - \langle \wt\alpha, x \rangle \alpha^\vee$.  Visually, in the case $i < j < j^* < i^*$,
\begin{multline*}
   (\dotsc,x_i,\dotsc,x_j,\dotsc,x_{j^*},\dotsc,x_{i^*}\dotsc)
   \overset{s_\alpha}{\mapsto}\\
   (\dotsc,x_j+d,\dotsc,x_i-d,\dotsc,x_{i^*} + d,\dotsc,x_{j^*}-d,\dotsc).
\end{multline*}

If $w\in\wt W_{2n}$ has extended alcove $v_0,\dotsc,v_{2n-1}$, then $s_{\wt\alpha} w$ has extended alcove $s_{\wt\alpha} v_0,\dotsc,s_{\wt\alpha} v_{2n-1}$.

\subsection{Reflections and permissibility}\label{ss:refl_perm}
Given an affine root $\wt\alpha$ and a $\mu$-permissible $w \in \wt W_{2n}$, our aim in this subsection is to investigate when $s_{\wt\alpha} w$ is again $\mu$-permissible.  This subsection may be regarded as analogous to \cite{kottrap00}*{\s5.2} or to \cite{sm09a}*{\s8.4}, but our inspiration is taken from \cite{kottrap00}*{\s11} instead.  The statements and arguments in this and the next subsection are general and hold in an obvious way for any extended affine Weyl group and any cocharacter, but rather than introduce new notation, we shall express them in terms of our particular group $\wt W_{2n}$ and particular cocharacter $\mu$.

Let $\alpha$ denote the linear part of $\wt\alpha$.  Since $w$ is $\mu$-permissible, we see from \eqref{st:comb_perm} that $s_{\wt\alpha}w$ is $\mu$-permissible $\iff$ for all $0 \leq k \leq n$,
\[
   s_{\wt\alpha}w a_k' - a_k' \in \Conv(S_{2n}^\circ\mu),
\]
with $a_k'$ defined in \eqref{disp:a_k'}.  Applying $s_\alpha$ to $s_{\wt\alpha}w a_k' - a_k'$ and using that $\Conv(S_{2n}^\circ\mu)$ is stable under the action of the Weyl group, we conclude the following.

\begin{lem}\label{st:perm_cond}
If $w$ is $\mu$-permissible, then $s_{\wt\alpha} w$ is $\mu$-permissible $\iff$
\begin{flalign*}
   \phantom{\qed}& & wa_k' - a_k' + \langle \wt\alpha, a_k' \rangle \alpha^\vee \in \Conv( S_{2n}^\circ \mu)\quad \text{for all}\quad 0 \leq k \leq n. & & \qed
\end{flalign*}
\end{lem}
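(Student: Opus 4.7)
The plan is to combine the Proposition \ref{st:comb_perm} characterization \eqref{it:nu_k_perm_cond} of $\mu$-permissibility with a single direct reflection computation. Writing $\nu_k' := (s_{\wt\alpha}w)a_k' - a_k'$ for the vector associated to $s_{\wt\alpha}w$, I first observe that $s_{\wt\alpha} \in W_{\aff,2n}$ forces $s_{\wt\alpha}w \equiv t_\mu \bmod W_{\aff,2n}$, so the congruence half of $\mu$-permissibility is automatic for $s_{\wt\alpha}w$. It therefore suffices to characterize when $\nu_k' \in \Conv(S_{2n}^\circ \mu)$ for all $0 \le k \le n$, together with the orbit conditions at $k=0,n$, which I handle at the end.

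The key computation is to apply the reflection formula $s_{\wt\alpha}y = y - \langle \wt\alpha, y\rangle \alpha^\vee$ at $y = wa_k'$, split $\langle \wt\alpha, wa_k'\rangle$ as $\langle \wt\alpha, a_k'\rangle + \langle \alpha, \nu_k\rangle$ using the affinity of $\wt\alpha$, and recognize $\nu_k - \langle \alpha, \nu_k\rangle\alpha^\vee = s_\alpha(\nu_k)$. This yields the identity
\[
   \nu_k' = s_\alpha(\nu_k) - \langle \wt\alpha, a_k'\rangle \alpha^\vee.
\]
Since $\Conv(S_{2n}^\circ \mu)$ is $s_\alpha$-stable and $s_\alpha(\alpha^\vee) = -\alpha^\vee$, applying $s_\alpha$ to both sides gives the equivalence $\nu_k' \in \Conv(S_{2n}^\circ \mu) \iff \nu_k + \langle \wt\alpha, a_k'\rangle \alpha^\vee \in \Conv(S_{2n}^\circ \mu)$, which is precisely the condition in the lemma.

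Finally, I verify that the extra orbit requirements $\nu_0', \nu_n' \in S_{2n}^\circ \mu$ follow automatically from $\Conv$-membership. Both vectors are integral, and the only integer points of $\Conv(S_{2n}^\circ \mu)$ are $\mathbf 0$ and the orbit $S_{2n}^\circ\mu$ itself; the $\mathbf 0$ possibility is excluded by the congruence $s_{\wt\alpha}w \equiv t_\mu \bmod W_{\aff,2n}$ together with $\mu \notin Q^\vee_{2n}$, using Lemma \ref{st:sigma_even} to transfer the assertion for $\nu_0'$ over to $\nu_n'$.

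The entire argument is essentially bookkeeping atop the reflection formula; the only point requiring slight care is the endpoint orbit condition, but it is automatic as just described, so I anticipate no serious obstacle.
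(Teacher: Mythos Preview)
Your proof is correct and follows essentially the same route as the paper: apply $s_\alpha$ to $s_{\wt\alpha}wa_k' - a_k'$ and use the $W$-stability of $\Conv(S_{2n}^\circ\mu)$, with the endpoint orbit conditions handled via the congruence $s_{\wt\alpha}w \equiv t_\mu \bmod W_{\aff,2n}$. The paper compresses all of this into the single sentence preceding the lemma, whereas you spell out the reflection identity $s_\alpha(\nu_k') = \nu_k + \langle\wt\alpha,a_k'\rangle\alpha^\vee$ and the exclusion of $\nu_0' = \nu_n' = \mathbf 0$ explicitly.
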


Note that if $\wt\alpha = \wt\alpha_{i,j;d}$ with $i < j \neq i^*$, then the coefficient
\[
   \langle \wt\alpha, a_k'\rangle = \langle \alpha, a_k'\rangle -d \in \bigl\{0,\tfrac 1 2, 1\bigr\} - d.
\]

\subsection{Reflections and the Bruhat order}\label{ss:refl_bo}
We continue with $w \in \wt W_{2n}$ and affine root $\wt\alpha$ with linear part $\alpha$.  The elements $w$ and $s_{\wt\alpha} w$ are related in the Bruhat order, and in this subsection we wish to investigate which way the inequality goes in terms of $\wt\alpha$ and the extended alcove attached to $w$.  We again root our discussion in \cite{kottrap00}*{\s11}.

To begin, we recall that for any $v \in X_* \tensor \RR$, the vectors
\begin{equation}\label{disp:wv-v_variants}
\begin{aligned}
   wv-v&,\\
   s_{\wt\alpha}wv-v &= wv -v - \langle\wt\alpha, wv \rangle \alpha^\vee,\\
   s_\alpha(wv-v) &= wv-v - \langle\alpha, wv -v \rangle \alpha^\vee
\end{aligned}
\end{equation}
all lie along the same line, and that  \cite{kottrap00}*{11.4} 
$w < s_{\wt\alpha}w$ $\iff$ for some $v \in \ol A$, $s_{\wt\alpha}wv-v$ lies outside the segment of this line between $wv-v$ and $s_\alpha(wv-v)$.  Since the line has direction vector $\alpha^\vee$ and the midpoint of the segment lies in the hyperplane where $\alpha$ vanishes, this holds $\iff$
\[
   |\langle\alpha, wv -v\rangle| 
      < |\langle\alpha, s_{\wt\alpha}wv -v\rangle|
      \quad\text{for some} \quad v \in \ol A;
\]
or, replacing $s_{\wt\alpha}wv -v$ by the reflected vector $s_{\alpha}( s_{\wt\alpha} wv -v)$, $\iff$
\[
   |\langle\alpha, wv -v\rangle| 
      < \bigl|\bigl\langle\alpha, wv -v + \langle \wt \alpha, v \rangle \alpha^\vee \bigr\rangle\bigr| 
      \quad\text{for some}\quad v \in \ol A.
\]
To avoid having to consider all $v \in \ol A$, we now reformulate our criterion in a way that will allow us to consider only the points $a_k'$.

\begin{lem}\label{st:bo}
Let $S$ be any subset of $\ol A$, and suppose that
\begin{enumerate}
\item\label{it:bo1}
   $|\langle\alpha, wv -v\rangle| < \bigl|\bigl\langle\alpha, wv -v + \langle \wt \alpha, v \rangle \alpha^\vee \bigr\rangle\bigr|$ for some $v \in S$; or
\item\label{it:bo2}
   there exists $v \in S$ such that $\langle \wt \alpha, v \rangle = 0$ and $\langle \alpha, wv-v \rangle$ is positive or negative according as $\wt \alpha$ is positive or negative on $A$.
\end{enumerate}
Then $w < s_{\wt\alpha}w$.  The converse holds if $S$ is not contained in the closure of a single proper subfacet of $A$.
\end{lem}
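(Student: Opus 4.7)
The plan is to repackage conditions (i), (ii), and the criterion recalled in the preceding paragraph into the sign of a single affine function on $\ol A$. Define
\[
h(v) := \langle\alpha, wv - v\rangle + \langle\wt\alpha, v\rangle.
\]
Writing $w = t_\nu\sigma$ with $\sigma \in S_{2n}^*$ and $\wt\alpha = \alpha - d$ with $d\in\ZZ$, a direct expansion gives
\[
h(v) = \sigma^{-1}\alpha(v) + \bigl(\alpha(\nu) - d\bigr).
\]
The linear part $\sigma^{-1}\alpha$ is a root, and the constant $\alpha(\nu) - d$ is an integer since $\nu \in \ZZ^n$ and $d \in \ZZ$; so $h$ is an affine-root function, and $\{h = 0\}$ is an affine root hyperplane.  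Replacing $\wt\alpha$ by $-\wt\alpha$ if needed --- which preserves $s_{\wt\alpha}$, (i), (ii), and the criterion --- I assume $\wt\alpha > 0$ on $A$, so $\wt\alpha \geq 0$ on $\ol A$.

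Squaring both sides of the inequality in (i) and using $\langle\alpha,\alpha^\vee\rangle = 2$, a short calculation shows that (i) at $v$ is equivalent to $\wt\alpha(v) > 0$ and $h(v) > 0$, and the same calculation applied to the criterion recalled above yields the equivalent form: there exists $v \in \ol A$ with $\wt\alpha(v)\,h(v) > 0$.  Condition (ii) at $v$ reads $\wt\alpha(v) = 0$ and $\alpha(wv-v) > 0$, which is equivalent to $\wt\alpha(v) = 0$ and $h(v) > 0$.  Since $\wt\alpha \geq 0$ on $\ol A$, these three equivalences combine to say: (i) or (ii) at $v \in \ol A$ holds $\iff$ $h(v) > 0$, and $w < s_{\wt\alpha}w$ holds $\iff$ $h > 0$ somewhere on $\ol A$.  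The forward direction of the lemma is then immediate.

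For the converse, suppose $w < s_{\wt\alpha}w$, so $h(v_0) > 0$ for some $v_0 \in \ol A$.  Because $\{h = 0\}$ is an affine root hyperplane, the connected open alcove $A$ lies in one of the open half-spaces $\{h > 0\}$ or $\{h < 0\}$; the existence of $v_0$ rules out the second possibility (by continuity, $h > 0$ on a neighborhood of $v_0$ in $\ol A$, which meets $A$), so $h > 0$ on $A$ and $h \geq 0$ on $\ol A$.  Therefore $\{h \leq 0\} \cap \ol A$ equals the zero set of the nonnegative affine function $h$ on the convex polytope $\ol A$, which is a face of $\ol A$ --- proper since $h \not\equiv 0$ --- i.e., the closure of a proper subfacet of $A$.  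The hypothesis forbids $S$ from being contained in this face, so some $v \in S$ satisfies $h(v) > 0$, and (i) or (ii) holds at that $v$ according as $\wt\alpha(v) > 0$ or $\wt\alpha(v) = 0$.

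The crucial structural input is the integrality of $\alpha(\nu) - d$, which identifies $\{h = 0\}$ as an affine root hyperplane and thereby prevents it from cutting through any alcove interior; without this, $\{h \leq 0\} \cap \ol A$ could be a full-dimensional sub-polytope instead of a face, and the converse would fail.  Granted this observation, the remaining ingredients --- the algebraic reformulation of (i), (ii), and the criterion in terms of $h$, and the elementary fact that the vanishing locus of a nonnegative affine function on a polytope is a face --- are routine.
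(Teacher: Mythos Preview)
Your proof is correct and rests on the same geometric content as the paper's, but you package it more cleanly. Note that your function $h$ is simply $h(v) = \langle\wt\alpha, wv\rangle$ (this follows at once from the identity $\langle\alpha, wv-v\rangle = \langle\wt\alpha, wv\rangle - \langle\wt\alpha, v\rangle$ in the paper), so the fact that $\{h=0\}$ is an affine root hyperplane is immediate without the explicit computation of the linear part and constant term. The paper proceeds by treating (i) and (ii) separately in the forward direction---for (ii) it invokes directly that $w < s_{\wt\alpha}w$ iff $A$ and $wA$ lie on the same side of $\{\wt\alpha = 0\}$, which is precisely your ``$h>0$ on $A$''---and for the converse it argues that the failure of (i) forces, for each $v\in S$, either $\langle\wt\alpha, v\rangle = 0$ or $\langle\wt\alpha, wv\rangle = 0$, and then uses the hypothesis on $S$ to rule out the second alternative for some $v$; this is again your observation that $\{h=0\}\cap\ol A$ is a proper face. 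Your unification via the single inequality $h(v)>0$ eliminates the case split and makes the role of the face structure more transparent. One small organizational point: the equivalence ``$w<s_{\wt\alpha}w$ $\iff$ $h>0$ somewhere on $\ol A$'' in the direction needed for the forward part of the lemma requires the density argument (if $h(v)>0$ at a boundary point where $\wt\alpha(v)=0$, pass to nearby $v'\in A$); you supply this argument, but only in the converse paragraph, so strictly speaking it should be invoked a few lines earlier.
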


Note that conditions \eqref{it:bo1} and \eqref{it:bo2} are not mutually exclusive.

\begin{proof}[Proof of \eqref{st:bo}]
We have just seen that \eqref{it:bo1} $\Longrightarrow$ $w < s_{\wt\alpha} w$.  To see that \eqref{it:bo2} $\Longrightarrow$ $w < s_{\wt\alpha} w$, note that for any $v$,
\begin{equation}\label{disp:alpha_wtalpha}
   \langle \alpha, wv-v \rangle 
      = \langle \wt\alpha, wv \rangle - \langle \wt\alpha, v \rangle.
\end{equation}
The conclusion now follows in case $v \in \ol A$ and $\langle \wt\alpha, v \rangle = 0$, since we have $w < s_{\wt\alpha}w$ exactly when $A$ and $wA$ lie on the same side of the hyperplane where $\wt \alpha$ vanishes.

To see that the converse holds when $S$ is not contained in the closure of a single proper subfacet, we may assume by the above discussion that $|\langle\alpha, wv -v\rangle| = \bigl|\bigl\langle\alpha, wv -v + \langle \wt \alpha, v \rangle \alpha^\vee \bigr\rangle\bigr|$ for all $v \in S$.  Hence for all $v \in S$,
\[
   wv -v + \langle \wt \alpha, v \rangle \alpha^\vee = wv - v
\]
or
\[
   wv -v + \langle \wt \alpha, v \rangle \alpha^\vee 
      = s_{\alpha}(wv-v)
      = wv - v -[\langle \wt \alpha, wv \rangle - \langle \wt\alpha, v \rangle]\alpha^\vee.
\]
The first of these is equivalent to $\langle \wt \alpha, v \rangle = 0$, and the second to $\langle \wt \alpha, wv \rangle = 0$.  Now, our assumption on $S$ implies that there exists $v \in S$ for which $\langle \wt \alpha, wv \rangle \neq 0$.  Hence $\langle \wt \alpha, v \rangle = 0$, and, again using \eqref{disp:alpha_wtalpha}, we deduce \eqref{it:bo2}.
\end{proof}

\subsection{Proof of the theorem}\label{ss:pf_even}
As explained at the beginning of the section, the proof of the Main Theorem for even orthogonal groups follows from the following result.

\begin{prop}\label{st:strong_thm_even}
If $w \in \wt W_{2n}$ is $\mu$-permissible and not a translation element, then there exists an affine reflection $s_{\wt\alpha}$ such that $s_{\wt\alpha} w$ is $\mu$-permissible, $w < s_{\wt\alpha} w$, and $w$ and $s_{\wt\alpha}w$ have the same translation parts.
\end{prop}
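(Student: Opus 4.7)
The plan is to produce $s_{\wt\alpha}$ explicitly from the combinatorics of $w$. The condition that $w$ and $s_{\wt\alpha}w$ share the translation part $\lambda = v_0$ is equivalent to $s_{\wt\alpha}$ fixing $v_0$, so, writing $\wt\alpha = \wt\alpha_{i,j;d}$ with $i < j \ne i^*$, we must take $d = v_0(i) - v_0(j)$. The search therefore reduces to selecting the unordered pair $\{i,j\}$ (together with its automatic $*$-dual $\{j^*, i^*\}$), with $j = i^*$ excluded because $\alpha_{i,i^*}$ is not a root.

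I would next unpack the combinatorial data guaranteed by Proposition \ref{st:comb_perm} together with \eqref{disp:mu_recursion} and \eqref{disp:mu_dlty_cond}: each $\mu_k$ is either $\mathbf 0$ or of the form $e_p - e_q$ with $p \ne q$; consecutive vectors satisfy $\mu_k - \mu_{k-1} = e_{\sigma(k)} - e_k$; and $\mu_k + \mu_{2n-k}^* = \mathbf 0$ determines $\mu_{2n-k}$ from $\mu_k$. Since $w$ is not a translation, $\sigma \ne 1$, so there is some $k$ at which the sequence $\mu_\bullet$ jumps nontrivially. The candidate pair $(i,j)$ will be drawn from the locations of such a jump, chosen so that $j \ne i^*$ and so as to create a Bruhat-increasing reflection. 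With such a candidate in hand, Lemma \ref{st:perm_cond} reduces permissibility of $s_{\wt\alpha}w$ to checking, for each $0 \le k \le n$, that $\nu_k + \langle\wt\alpha, a_k'\rangle \alpha^\vee$ lies in the $\ell^1$-ball \eqref{disp:conv_hull}; because $\alpha^\vee = e_i - e_j + e_{j^*} - e_{i^*}$ and $\langle \wt\alpha, a_k'\rangle$ takes only the three values $-d$, $\tfrac12 - d$, $1 - d$, this amounts to a short numerical check on just the four coordinates $i, j, j^*, i^*$ of each $\nu_k$, governed by the basic inequalities of Lemma \ref{st:basic_ineqs}. For the Bruhat inequality I apply Lemma \ref{st:bo} at $v = v_0 = a_0'$: since $\langle \wt\alpha, v_0\rangle = 0$ by design, condition \eqref{it:bo2} is available and $w < s_{\wt\alpha} w$ will follow from the sign of $\langle \alpha, w v_0 - v_0 \rangle = \langle \alpha, \sigma\lambda \rangle$, which the selection of $(i,j)$ must be arranged to control.

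The main obstacle will be the combinatorial bookkeeping required to handle the interaction between the pair $(i,j)$ and its enforced $*$-mirror $(j^*, i^*)$. Three features conspire to make the search delicate: the restriction $j \ne i^*$ rules out certain natural pairs; the symmetry $\sigma(i^*) = \sigma(i)^*$ forces the reflection to act simultaneously at $(i,j)$ and at $(j^*, i^*)$, doubling the risk of leaving $\Conv(S_{2n}^\circ\mu)$; and the bound $\sum|x_i| \le 1$ implicit in \eqref{disp:conv_hull} leaves essentially no slack. I anticipate a case split on whether the chosen pair lies in one half of $\{1,\ldots,2n\}$ or straddles the center, the straddling case being the hardest because the effects of the reflection at $(i,j)$ and at $(j^*, i^*)$ cannot be analyzed independently, and the duality \eqref{disp:mu_dlty_cond} must be deployed carefully to keep both halves inside the convex hull. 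A fallback rule for choosing $(i,j)$ — picking a different jump of $\mu_\bullet$ or exchanging $(i,j)$ for its $*$-dual — will almost certainly be needed in some subcases, in the spirit of the analogous arguments for $GL_n$ and $GSp_{2n}$ in the work of Kottwitz and Rapoport.
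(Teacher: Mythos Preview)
Your overall strategy matches the paper's: pick the pair $(i,j)$ from where the sequence $\nu_\bullet$ first jumps, choose $d$ so that $\langle\wt\alpha,\nu_0\rangle=0$, then verify permissibility via Lemma~\ref{st:perm_cond} and the Bruhat increase via Lemma~\ref{st:bo}. The paper carries this out with the specific choice $i=\min\{k:\nu_{k-1}\ne\nu_k\}$; permissibility at that first jump forces $\nu_0=e_j-e_{j^*}$ with $j=i$ or $j=\sigma(i)^*$, and the affine root is then $\wt\alpha_{i,\sigma^{-1}(i);1}$ or $\wt\alpha_{i,\sigma(i);1}$ (with one further subcase when some $\nu_r=\mathbf 0$). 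The case split the paper actually needs is this dichotomy, not the ``one half versus straddling'' split you anticipate.

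There is, however, a genuine gap in your Bruhat step. You write ``apply Lemma~\ref{st:bo} at $v=v_0=a_0'$,'' but these are different points: $a_0'=\mathbf 0$ is the vertex of $\ol A$, while $v_0=\nu_0=wa_0'-a_0'$ is the translation part. The constraint ``same translation part'' is precisely $\langle\wt\alpha,\nu_0\rangle=0$, i.e.\ the reflecting hyperplane passes through $wa_0'$, so $s_{\wt\alpha}wa_0'=wa_0'$. At $v=a_0'$ one then computes $\langle\alpha,wv-v\rangle=\langle\alpha,\nu_0\rangle=d$ and $\langle\wt\alpha,v\rangle=-d$, so condition~\eqref{it:bo1} reads $|d|<|d|$ and condition~\eqref{it:bo2} requires $d=0$, whence $\langle\alpha,wv-v\rangle=0$ has no sign. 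Thus $a_0'$ can \emph{never} witness $w<s_{\wt\alpha}w$ under your translation-part constraint. (Your formula $\langle\alpha,wv_0-v_0\rangle=\langle\alpha,\sigma\lambda\rangle$ is also off: $wa_0'-a_0'=\lambda$, not $\sigma\lambda$.) In the paper the Bruhat increase is detected at a vertex $a_k'$ with $k\ge i$ --- specifically $a_i'$, $a_r'$, $a_{\wt\imath-1}'$, or $a_{\wt\imath^*}'$ depending on the case --- and both conditions \eqref{it:bo1} and \eqref{it:bo2} get used. Your outline supplies no mechanism for choosing that vertex or controlling the sign there, and without it the argument does not close.
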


Strictly speaking, the fact that we can choose $\wt\alpha$ such that $s_{\wt\alpha}w$ has the same translation part as $w$ is a stronger conclusion than we need to deduce the Main Theorem in the even case.  On the other hand, this stronger conclusion immediately yields the following corollary, which we will use in the proof of the Main Theorem in the odd case in \s\ref{s:thm_odd}.

\begin{cor}\label{st:=<_trans_part}
If $w \in \wt W_{2n}$ is $\mu$-permissible, then $w$ is less than or equal to its translation part in the Bruhat order.\qed
\end{cor}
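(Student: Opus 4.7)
The proof is a straightforward iteration of Proposition \ref{st:strong_thm_even}. The plan is to show that, starting from any $\mu$-permissible $w$, we can build a strictly Bruhat-increasing chain of $\mu$-permissible elements, all sharing the same translation part as $w$, which necessarily terminates at a translation element; that element must then be the translation part of $w$, and this gives the desired inequality.

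More precisely, let $t_\nu \in \wt W_{2n}$ denote the translation part of $w$. If $w = t_\nu$ there is nothing to prove. Otherwise, apply Proposition \ref{st:strong_thm_even} to obtain an affine reflection $s_{\wt\alpha_1}$ such that $w_1 := s_{\wt\alpha_1}w$ is $\mu$-permissible, $w < w_1$ in the Bruhat order, and $w_1$ has the same translation part $t_\nu$ as $w$. If $w_1 \neq t_\nu$, apply the proposition again to $w_1$ to produce $w_2 := s_{\wt\alpha_2}w_1$ with the same properties, and so on. This yields a strictly increasing chain
\[
   w < w_1 < w_2 < \dotsb
\]
of $\mu$-permissible elements all with translation part $t_\nu$.

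Since $\Perm(\mu)$ is a finite subset of $\wt W_{2n}$, this chain must terminate after finitely many steps, say at some $w_k$. But by Proposition \ref{st:strong_thm_even}, termination is possible only when $w_k$ is a translation element. Because every $w_i$ has translation part $t_\nu$, the terminal translation element is $w_k = t_\nu$. Therefore $w \leq w_k = t_\nu$ in the Bruhat order, which is precisely the claim of the corollary. The argument uses no new ingredient beyond Proposition \ref{st:strong_thm_even} and the finiteness of $\Perm(\mu)$, so there is no real obstacle to overcome.
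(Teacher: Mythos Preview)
Your proof is correct and is exactly the argument the paper has in mind: the corollary is stated with a \qed because it follows immediately by iterating Proposition~\ref{st:strong_thm_even} along a strictly increasing chain of $\mu$-permissible elements with fixed translation part, which terminates by finiteness of $\Perm(\mu)$ at a translation element that must equal $t_\nu$. The paper sketches precisely this chain argument at the beginning of \S\ref{s:thm_even}, so there is no difference in approach.
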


\begin{rk}\label{rk:haines}
It should be emphasized here that our notion of translation part is with regard to the decomposition $w = t_{v_0}\sigma$.  But $t_{v_0}\sigma = \sigma t_{\sigma^{-1}v_0}$, so that $t_{\sigma^{-1}v_0}$ is another reasonable definition of the translation part of $w$.  Quite generally, Haines \cite{haines01b}*{proof of 4.6} has shown that for any minuscule cocharacter $\mu$ in any extended affine Weyl group, every $\mu$-admissible element $w = t_{v_0}\sigma$ satisfies $w \leq t_{\sigma^{-1}v_0}$.  The referee points out that this formally implies $w \leq t_{v_0}$ as well:  if $w \in \Adm(\mu)$, then $w^{-1} \in \Adm(-\mu)$, so that $w^{-1} \leq t_{-v_0}$ by Haines's result, so that $w \leq t_{v_0}$.  Thus \eqref{st:=<_trans_part} (or its combination with the Main Theorem) is really a special case of Haines's result.  Haines's proof relies in part on Hecke algebra techniques; it would be interesting to find a more direct proof.
\end{rk}

\begin{proof}[Proof of \eqref{st:strong_thm_even}]
Let $w$ be $\mu$-permissible and not a translation element, let $\sigma$ denote the linear part of $w$, and let $\nu_k = wa_k' - a_k'$ for $0 \leq k \leq n$.  As usual, given an affine root $\wt\alpha$, we denote its linear part by $\alpha$.  Taking $v = \mathbf 0$ in \eqref{disp:wv-v_variants}, we see that $w$ and $s_{\wt\alpha}w$ have the same translation parts $\iff$ $\langle \wt\alpha, \nu_0 \rangle = 0$.

Since $w$ is not a translation element, the various $\nu_k$'s are not all equal, and we let $i$ denote the minimal element in $\{1,\dotsc,n\}$ such that $\nu_0 = \dotsb = \nu_{i-1} \neq \nu_i$.  By the recursion relation \eqref{disp:nu_recursion} we have $\sigma(k) = k$ for all $k <i$, so that by duality $\sigma(k) = k$ for all $k > i^*$ as well.  The common vector $\nu_0 = \dotsb = \nu_{i-1}$ is of the form $e_j - e_{j^*}$ for some $j \in \{1,\dotsc,2n\}$, and, by the recursion relation,
\[
   \nu_i = e_j - e_{j^*} + \frac{e_{\sigma(i)} - e_i + e_{i^*} - e_{\sigma(i)^*}}2.
\]
Since $\sigma(i) \neq i$, the containment $\nu_i \in \Conv(S_{2n}^\circ \mu)$ forces
\begin{equation}\label{disp:j_possibilities}
   j = i \quad \text{or} \quad j = \sigma(i)^*.
\end{equation}
We shall consider these possibilities separately.

First suppose $j = i$, and suppose further that $\nu_r = \mathbf 0$ for some $r$.  By minimality of $i$ we have $i \leq r$; and since $\nu_n \in S_{2n}^\circ \mu$, we have $r < n$.  If $i = r$, then evidently $\sigma(i) = i^*$, and we take $\wt\alpha := \wt\alpha_{i,i^*-1;1}$.  Then $\langle \wt\alpha, \nu_0 \rangle = 0$ and
\[
   \nu_k + \langle \wt\alpha, a_k' \rangle \alpha^\vee =
   \begin{cases}
      e_{i^*-1} - e_{i+1},  &  0 \leq k < i;\\
      -\tfrac 1 2 \alpha^\vee,  &  k = i;\\
      \nu_k,  &  i < k \leq n.
   \end{cases}
\]
Hence $s_{\wt\alpha}w$ is $\mu$-permissible by \eqref{st:perm_cond} and $w < s_{\wt\alpha}w$ by taking $v = a_i'$ in condition \eqref{it:bo1} in \eqref{st:bo}, as desired.

If $i < r$, then we take $\wt\alpha := \wt\alpha_{r,r+1;0}$.  Then $\langle \wt\alpha, \nu_0 \rangle = 0$ and
\[
   \nu_k + \langle \wt\alpha, a_k' \rangle \alpha^\vee =
   \begin{cases}
      \nu_k,  &  k \neq r;\\
      \tfrac 1 2 \alpha^\vee,  &  k = r.
   \end{cases}
\]
Hence $s_{\wt\alpha}w$ is $\mu$-permissible by \eqref{st:perm_cond} and $w < s_{\wt\alpha}w$ by taking $v = a_r'$ in condition \eqref{it:bo1} in \eqref{st:bo}, as desired.

Having dispensed with these cases, we continue to suppose $j = i$ in \eqref{disp:j_possibilities}, but we now suppose that $\nu_k \neq \mathbf 0$ for all $k$.  In particular, $\nu_i \neq \mathbf 0$, so that $\sigma(i) \neq i^*$.  Setting $\wt\imath := \sigma^{-1}(i)$, we obtain $\wt\imath \neq i$, $i^*$.  Hence $i < \wt\imath,\wt\imath^* < i^*$ by minimality of $i$, and we take $\wt\alpha := \wt\alpha_{i,\wt\imath;1}$.  The relation $\langle \wt\alpha, \nu_0 \rangle = 0$ is immediate.  We shall show that $s_{\wt\alpha}w$ is $\mu$-permissible by verifying the condition in \eqref{st:perm_cond} for $k$ separately in the ranges
\[
   0 \leq k < i,\quad i \leq k < m, \quad \text{and}\quad m \leq k \leq n,
\]
where $m := \min\{\wt\imath,\wt\imath^*\}$.

For $0 \leq k < i$, we have $\langle \wt\alpha, a_k'\rangle = -1$ and $\nu_k + \langle \wt\alpha, a_k'\rangle\alpha^\vee = e_{\wt\imath} - e_{\wt\imath^*} \in \Conv(S_{2n}^\circ\mu)$.

In the particular case $k = i$, we have explicitly
\begin{equation}\label{disp:nu_i}
  \nu_i = \frac{e_i - e_{i^*} + e_{\sigma(i)} - e_{\sigma(i)^*}}2,
\end{equation}
and we deduce from the recursion formula \eqref{disp:nu_recursion} and the fact that $\sigma$ is a permutation that $\nu_k(i) = \frac 1 2$ and $\nu_k(i^*) = - \frac 1 2$ for all $i \leq k < m$.  Hence by \eqref{rk:perm_nu} $\nu_k$ is of the form
\begin{equation}\label{disp:special_nu}
   \nu_k = \frac{e_i-e_{i^*} + e_{r_k} - e_{r_k^*}}2
   \quad\text{with}\quad
   r_k \neq i,\ i^*
\end{equation}
for such $k$.  Since $\langle \wt\alpha, a_k'\rangle = - \frac 1 2$ for such $k$, we conclude
\begin{equation}\label{disp:nu_k+<>alpha^vee}
   \nu_k + \langle \wt\alpha, a_k'\rangle\alpha^\vee 
      = \frac{e_{\wt\imath} - e_{\wt\imath^*} + e_{r_k} - e_{r_k^*}}2 
             \in \Conv(S_{2n}^\circ \mu).
\end{equation}

For $m \leq k \leq n$, we have $\langle \wt\alpha, a_k'\rangle = -1$ or $0$ according as $m = \wt\imath$ or $m = \wt\imath^*$.  In the latter case, we obtain immediately $\nu_k + \langle \wt\alpha, a_k'\rangle\alpha^\vee = \nu_k \in \Conv(S_{2n}^\circ \mu)$.  In the former case, we observe
\begin{align}
   \notag
   \nu_{\wt\imath} 
      &= \nu_{\wt\imath - 1} 
         + \frac{e_{i} - e_{\wt\imath} + e_{\wt\imath^*} - e_{i^*}}2
         \quad & &\text{by \eqref{disp:nu_recursion}}\\
      \notag
      &= \frac{e_i-e_{i^*} + e_{r_{\wt\imath - 1}} - e_{r_{\wt\imath - 1}^*}}2
         + \frac{e_{i} - e_{\wt\imath} + e_{\wt\imath^*} - e_{i^*}}2
         \quad & &\text{by \eqref{disp:special_nu}}\\
      \label{disp:nu_wti}
      &= e_{i} - e_{i^*} 
         + \text{(terms not involving $e_i$, $e_{i^*}$)}.
          \quad & &
\end{align}
The condition $\nu_{\wt\imath} \in \Conv(S_{2n}^\circ \mu)$ then forces $\nu_{\wt\imath} = e_i - e_{i^*}$, i.e.\ the terms not involving $e_i$ and $e_{i^*}$ in \eqref{disp:nu_wti} must cancel.  Since $\sigma$ is a permutation, we similarly deduce that $\nu_k = e_i - e_{i^*}$ for all $\wt\imath \leq k \leq n$.  Hence
\[
   \nu_k + \langle \wt\alpha, a_k'\rangle\alpha^\vee 
      = \nu_k - \alpha^\vee
      = e_{\wt\imath} - e_{\wt\imath^*} \in \Conv(S_{2n}^\circ \mu)
\]
for such $k$.  We conclude that $s_{\wt\alpha}w$ is $\mu$-permissible.

To check that $w < s_{\wt\alpha}w$, we consider the cases $m = \wt\imath$ and $m = \wt\imath^*$ separately.  If $m = \wt\imath$, then we take $v = a_{\wt\imath - 1}'$ in condition \eqref{it:bo1} in \eqref{st:bo}.  Indeed, the cancellation of the terms not involving $e_i$ and $e_{i^*}$ in \eqref{disp:nu_wti} implies that
\[
   \nu_{\wt\imath - 1} 
      = \frac{e_i - e_{i^*} + e_{\wt\imath} - e_{\wt\imath^*}}2.
\]
Hence $\langle \alpha, \nu_{\wt\imath - 1} \rangle = 0$ and $\bigl\langle \alpha, \nu_{\wt\imath - 1} + \langle \wt\alpha, a_{\wt\imath - 1}' \rangle \alpha^\vee \bigr\rangle = \langle \alpha, e_{\wt\imath} - e_{\wt\imath^*} \rangle = -1$, and the conclusion follows.  If $m = \wt\imath^*$, then we take $v = a_{\wt\imath^*}'$ in condition \eqref{it:bo2} in \eqref{st:bo}.  Indeed, by the recursion formula and \eqref{disp:special_nu},
\[
   \nu_{\wt\imath^*} = \frac{e_{\wt\imath} - e_{\wt\imath^*} + e_{r_{\wt\imath^* - 1}} - e_{r_{\wt\imath^* - 1}^*}}2
   \quad\text{with}\quad
   r_{\wt\imath^* - 1} \neq i,\ i^*.
\]
Since moreover $\nu_{\wt\imath^*} \neq \mathbf 0$, we have $r_{\wt\imath^* - 1} \neq \wt\imath^*$.  Hence $\langle \wt\alpha, a_{\wt\imath^*}'\rangle = 0$ and $\langle\alpha, \nu_{\wt\imath^*}\rangle < 0$, and the conclusion follows.  This completes the proof in the case $j = i$.

It remains to consider the possibility $j = \sigma(i)^* \neq i$ in \eqref{disp:j_possibilities}.  In this case we have $i < \sigma(i), \sigma(i)^* < i^*$, and we take $\wt\alpha := \wt\alpha_{i,\sigma(i);1}$.  The rest of the proof proceeds very similarly to the case $j = i$ just considered.  The relation $\langle \wt\alpha, \nu_0\rangle = 0$ is immediate. We show that $s_{\wt\alpha}w$ is $\mu$-permissible by again checking the criterion in \eqref{st:perm_cond} for $k$ separately in the ranges $0 \leq k < i$, $i \leq k < m$, and $m \leq k \leq n$, where this time $m := \min\{\sigma(i),\sigma(i)^*\}$.

For $0 \leq k < i$, we have $\langle \wt\alpha, a_k' \rangle = -1$ and $\nu_k + \langle \wt\alpha, a_k' \rangle\alpha^\vee = e_{i^*} - e_i \in \Conv(S_{2n}^\circ \mu)$.

For $k = i$, we have
\[
   \nu_i = \frac{e_{\sigma(i)^*} - e_{\sigma(i)} + e_{i^*} - e_i}2,
\]
and we deduce similarly to the $j = i$ case that $\nu_k$ is of the form
\[
   \nu_k = \frac{e_{\sigma(i)^*} - e_{\sigma(i)} + e_{r_k} - e_{r_k^*}}2
   \quad\text{with}\quad
   r_k \neq \sigma(i),\ \sigma(i)^*
\]
for all $i \leq k < m$.  Since $\langle \wt\alpha, a_k'\rangle = -\frac 1 2$ for such $k$, we conclude
\[
   \nu_k + \langle \wt\alpha, a_k'\rangle\alpha^\vee = \frac{e_{i^*} - e_{i} + e_{r_k} - e_{r_k^*}}2 \in \Conv(S_{2n}^\circ \mu).
\]

For $m \leq k \leq n$, we have $\langle \wt\alpha, a_k'\rangle = -1$ or $0$ according as $m = \sigma(i)$ or $m = \sigma(i)^*$.  In the latter case, we obtain immediately $\nu_k + \langle \wt\alpha, a_k'\rangle\alpha^\vee = \nu_k \in \Conv(S_{2n}^\circ \mu)$.  In the former case, a similar argument to the one given in the $j = i$ case shows that $\nu_k = e_{\sigma(i)^*} - e_{\sigma(i)}$ for all $\sigma(i) \leq k \leq n$, so that $\nu_k + \langle \wt\alpha, a_k'\rangle\alpha^\vee = e_{i^*} - e_i \in \Conv(S_{2n}^\circ \mu)$ for such $k$.  This completes the proof that $s_{\wt\alpha}w$ is $\mu$-permissible.

To see that $w < s_{\wt\alpha}w$, we take $v = a_i'$ in condition \eqref{it:bo1} of \eqref{st:bo}.  Then $\langle \alpha, \nu_i \rangle = 0$ and $\bigl\langle \alpha, \nu_i + \langle \wt\alpha, a_i' \rangle \alpha^\vee \bigr\rangle = -1$, and the conclusion follows.

This completes the proof of \eqref{st:strong_thm_even}, and with it the proof of the Main Theorem for even orthogonal groups.
\end{proof}

\section{Main theorem for odd orthogonal groups}\label{s:thm_odd}

In this section we deduce the Main Theorem for odd orthogonal groups from the Main Theorem for even orthogonal groups.  The key link is the notion of Steinberg fixed-point root datum, which we review in the first subsection.  In contrast with the previous two sections, when considering the groups $\wt W_{2n+1}$ and $\wt W_{2n+2}$ we shall work directly with the vector spaces $\RR^n$ and $\RR^{n+1}$, respectively, instead of embedding these into $\RR^{2n}$ and $\RR^{2n+2}$ as in \eqref{disp:cochar_emb}.

\subsection{Steinberg fixed-point root data}\label{ss:steinberg_fprd}
The main references for this subsection are  the papers of Steinberg \cite{st68}, Kottwitz and Rapoport \cite{kottrap00}, and Haines and Ng\^o \cite{hngo02b}, especially \cite{hngo02b}*{\s9}.  Let $\R = (X^*,X_*,R,R^\vee,\Pi)$ be a reduced and irreducible based root datum.  An \emph{automorphism} $\Theta$ of $\R$ is an automorphism $\Theta$ of the abelian group $X_*$ such that the subsets $R$, $\Pi \subset X^*$ are stable under the dual automorphism $\Theta^*$ of $X^*$ induced by $\Theta$ and the perfect pairing $X^* \times X_* \to \ZZ$.  It follows that any automorphism of $\R$ induces automorphisms of the coroots $R^\vee$, the coroot lattice $Q^\vee$, the Weyl group $W$, the affine Weyl group $W_\aff$, and the extended affine Weyl group $\wt W$.  Associated with $\Theta$ is the \emph{Steinberg fixed-point root datum} $\R^{[\Theta]} = (X^{*[\Theta]},X_*^{[\Theta]},R^{[\Theta]},R^{\vee [\Theta]}, \Pi^{[\Theta]})$; this is a reduced and irreducible based root system described explicitly in \cite{hngo02b}*{\s9}.  We shall just mention here that the cocharacter lattice $X_*^{[\Theta]}$ of $\R^{[\Theta]}$ is the subgroup of $X_*$
\[
   X_*^{[\Theta]} := \{\, x \in X_* \mid \Theta(x) \equiv x \bmod Z \,\},
\]
where $Z := \{\, x \in X_* \mid \langle \alpha, x \rangle = 0\ \text{for all}\ \alpha \in R\,\}$, and that the  Weyl group $W^{[\Theta]}$ of $\R^{[\Theta]}$ is the fixed-point subgroup $W^{\Theta} \subset W$.
 
As described in \s\ref{ss:defs}, the respective extended affine Weyl groups $\wt W$ and $\wt W^{[\Theta]}$ of $\R$ and $\R^{[\Theta]}$ carry canonical Bruhat orders, which we denote $\leq$ and $\leq^{[\Theta]}$.  The key fact for us is that $\leq^{[\Theta]}$ is inherited from $\leq$ under the inclusion $\wt W^{[\Theta]} \subset \wt W$.

\begin{prop}[Kottwitz--Rapoport \cite{kottrap00}*{2.3}, Haines--Ng\^o \cite{hngo02b}*{9.6}]\label{st:bo_inheritance}
Let $x$, $y \in \wt W^{[\Theta]}$.  Then $x \leq^{[\Theta]} y$ in $\wt W^{[\Theta]}$ $\iff$ $x \leq y$ in $\wt W$.\qed
\end{prop}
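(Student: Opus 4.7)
The plan is to reduce the claim to a statement purely about Coxeter groups and then appeal to Steinberg's theorem on fixed-point subgroups under a diagram automorphism. Since $\Theta$ preserves the simple roots and hence the base alcove, the semidirect decomposition $\wt W = W_\aff \rtimes \Omega$ induces a corresponding decomposition of $\wt W^{[\Theta]}$, and the Bruhat orders on both groups are defined componentwise (equality of $\Omega$-parts, plus comparison inside the affine factor). It therefore suffices to prove the statement for the affine Weyl-group factors---that is, to show that the induced Bruhat order $\leq^{[\Theta]}$ on $W_\aff^{[\Theta]}$, viewed as a Coxeter group in its own right, coincides with the restriction of the Coxeter Bruhat order on $W_\aff$ to the subgroup $W_\aff^{[\Theta]} \subset W_\aff$.

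At this stage I would invoke Steinberg's theorem: when every $\Theta$-orbit $J$ on the simple reflections of $W_\aff$ has finite type, the fixed-point subgroup $W_\aff^\Theta$ is itself a Coxeter group, with simple reflections the longest elements $s_J := w_{0,J}$ of the finite standard parabolic subgroups $W_J \subset W_\aff$. The explicit construction of $\R^{[\Theta]}$ in \cite{hngo02b}*{\S 9} identifies $W_\aff^{[\Theta]}$ with this Coxeter group and its generators $s_J$ as the ones defining $\leq^{[\Theta]}$. The technical heart of the argument is an \emph{unfolding lemma}: if $w = s_{J_1} \cdots s_{J_r}$ is a reduced expression in $W_\aff^{[\Theta]}$, then replacing each $s_{J_i}$ by any reduced word for it in $W_\aff$ and concatenating yields a reduced expression of $w$ in $W_\aff$; in particular $\ell_{W_\aff}(w) = \sum_i \ell_{W_\aff}(s_{J_i})$. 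I would prove this by induction on $r$, using that multiplication of a $\Theta$-fixed element by $s_J$ either raises or lowers the $W_\aff$-length by exactly $\ell_{W_\aff}(s_J)$, which one verifies via the strong exchange condition together with the characterizing property of $w_{0,J}$ in the finite parabolic $W_J$.

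Given the unfolding lemma, the forward implication $x \leq^{[\Theta]} y \Rightarrow x \leq y$ is immediate from the subword characterization of Bruhat order: a reduced-subword witness in $W_\aff^{[\Theta]}$ unfolds to one in $W_\aff$. The \emph{main obstacle} is the converse. Given a reduced expression $\underline y$ of $y$ in $W_\aff^{[\Theta]}$, its unfolding $\underline{\wt y}$ is a reduced expression of $y$ in $W_\aff$, and the hypothesis $x \leq y$ furnishes some subword of $\underline{\wt y}$ that is a reduced expression of $x$; one must \emph{refold} this to a subword of $\underline y$. I would argue block-by-block: within the block of $\underline{\wt y}$ corresponding to a single generator $s_{J_i}$, the selected portion of the subword spells out a reduced expression for some $u_i \in W_{J_i}$, and, since $x$ is $\Theta$-fixed and each $W_{J_i}$ is finite with longest element $s_{J_i}$, a length-preserving modification of the subword inside each block---carried out by repeated application of the exchange condition---can be arranged so that each $u_i$ is either the identity or $s_{J_i}$ itself. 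The resulting adjusted subword then refolds to a subword of $\underline y$ giving a reduced expression of $x$ in $W_\aff^{[\Theta]}$, as required.
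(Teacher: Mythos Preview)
The paper does not give its own proof of this proposition; it is quoted from \cite{kottrap00}*{2.3} and \cite{hngo02b}*{9.6} with a terminal \qed. So there is no in-paper argument to compare against, and your proposal should be assessed on its own merits and against the cited references.

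Your overall strategy is the standard one and matches what is done in \cite{hngo02b}*{\S9}: reduce to the affine Weyl group via the $W_\aff \rtimes \Omega$ decomposition, identify $W_\aff^{[\Theta]}$ with the Steinberg fixed-point Coxeter group generated by the $s_J = w_{0,J}$, prove the unfolding lemma (reduced words in $W_\aff^{[\Theta]}$ unfold to reduced words in $W_\aff$), and deduce the forward implication from the subword criterion. All of that is fine.

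The gap is in your converse (``refolding'') step. You propose to take a reduced subword of the unfolded expression $\underline{\wt y}$ representing $x$, decompose it into blocks $u_i \in W_{J_i}$, and then ``arrange so that each $u_i$ is either the identity or $s_{J_i}$'' by length-preserving modifications inside each block. This does not work as stated: the individual $u_i$ are \emph{not} $\Theta$-fixed (only their product $x$ is), so $\Theta$-invariance of $x$ gives you no leverage block-by-block. Moreover, the exchange condition in $W_\aff$ operates one simple reflection at a time; it lets you trade simple reflections within a reduced word, but it does not, on its own, let you replace an arbitrary $u_i \in W_{J_i}$ by an element of $\{1, s_{J_i}\}$ while preserving the global product $u_1 \cdots u_r = x$. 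As written, this step is a restatement of the goal rather than a proof of it.

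What actually makes the converse go through is an inductive argument using a \emph{lifting property} for the generators $s_J$: for $x, y \in W_\aff^{\Theta}$ with $\ell(s_J y) = \ell(y) - \ell(s_J)$ and $x \leq y$ in $W_\aff$, one shows that either $x \leq s_J y$ in $W_\aff$, or else $\ell(s_J x) = \ell(x) - \ell(s_J)$ and $s_J x \leq s_J y$ in $W_\aff$. This is the analogue, for the longest element of the finite parabolic $W_J$, of the familiar lifting/Z-property for a single simple reflection, and it is established using the coset decomposition $W_J \cdot {}^J W$ together with the compatibility of Bruhat order with minimal coset representatives. Once you have this, induction on $\ell^{[\Theta]}(y)$ gives $x \leq^{[\Theta]} y$. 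This is essentially how \cite{hngo02b}*{9.6} (following \cite{kottrap00}) proceeds; your sketch identifies the right place where the difficulty lies but does not supply the mechanism that resolves it.
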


\subsection{Application to orthogonal groups}
Consider the automorphism of $\RR^{n+1}$
\[
   \Theta \colon (x_1,\dotsc,x_{n+1}) \mapsto (x_1,\dotsc,x_n,-x_{n+1}).
\]
Then $\Theta$ gives an automorphism of the based root datum of $SO_{2n+2}$ in the sense of \s\ref{ss:steinberg_fprd}, and the map
\begin{equation}\label{disp:odd_SO->even_SO}
   (x_1,\dotsc,x_n) \mapsto (x_1,\dotsc,x_n,0)
\end{equation}
realizes the based root datum of $SO_{2n+1}$ as the Steinberg fixed-point root datum.  The induced embedding $\wt W_{2n+1} \inj \wt W_{2n+2}$ sends translation elements to translation elements according to the rule \eqref{disp:odd_SO->even_SO}, and Weyl group elements to Weyl group elements as follows:  if we represent a given $\sigma \in S_{2n}^*$ as  $(\varepsilon_1,\dotsc,\varepsilon_n) \cdot \tau \in \{\pm 1\}^n \rtimes S_n$, then
\[
   (\varepsilon_1,\dotsc,\varepsilon_n) \cdot \tau \mapsto
   (\varepsilon_1,\dotsc,\varepsilon_n,\eta) \cdot \tau' \in \{\pm 1\}^{n+1} \rtimes S_{n+1},
\]
where $\tau'$ is the permutation $\tau'(i) = \tau(i)$ for $1 \leq i \leq n$ and $\tau'(n+1) = n + 1$, and $\eta = 1$ or $-1$ according as $\sigma$ is even or odd.

\subsection{Proof of the theorem}
We now prove the Main Theorem for odd orthogonal groups.  We must show that $\mu$-permissibility implies $\mu$-ad\-miss\-ibility in $\wt W_{2n+1}$; we shall do so by exploiting the validity of this implication in $\wt W_{2n+2}$.  Let $w \in \wt W_{2n+1}$, say with translation part $t_{\nu_0}$ and linear part $\sigma$, and suppose that $w$ is $\mu$-permissible.  Using \eqref{disp:odd_SO->even_SO}, we identify points in $\RR^n$ with their images in $\RR^{n+1}$.  Our permissibility assumption implies that
\[
   \nu_0 \in S_{2n}^*\mu \subset S_{2n+2}^\circ \mu
\]
and that
\[
   wv - v \in \Conv(S_{2n}^*\mu) \subset \Conv(S_{2n+2}^\circ \mu)
\]
for $v$ any of the points
\[
   (0,\dotsc,0), \quad
   (1,0^{(n)}), \quad
   \text{and}\quad 
   \bigl((\tfrac 1 2)^{(i)},0^{(n+1-i)}\bigr)
   \quad \text{for}\quad  2 \leq i \leq n.
\]

We claim that that $w \cdot\mathbf{\frac 1 2} - \mathbf{\frac 1 2} \in \Conv(S_{2n+2}^\circ \mu)$ as well.  Indeed, let $a = \bigl((\tfrac 1 2)^{(n)},0\bigr)$.  Our description of the embedding $S_{2n}^* \inj S_{2n+2}^\circ$ makes plain that
\[
   w \cdot\mathbf{\tfrac 1 2} - \mathbf{\tfrac 1 2} = wa - a
   \quad \text{or} \quad
   wa -a + (0^{(n)},-1)
\]
according as $\sigma$ is even or odd.  If $\sigma$ is even, then our claim follows immediately.  If $\sigma$ is odd, then it follows from \eqref{st:sigma_even} and the containment $wa - a \in \Conv(S_{2n}^*\mu)$ that $wa - a = \mathbf 0$, and our claim again follows.

It is now easy to see that condition \eqref{it:nu_k_perm_cond} in \eqref{st:comb_perm} is satisfied (with $n+1$ in place of $n$), so that $w$ is $\mu$-permissible in $\wt W_{2n+2}$.  The theorem now follows from \eqref{st:=<_trans_part} and \eqref{st:bo_inheritance}.\qed

\begin{rk}
Of course, \eqref{st:=<_trans_part} implies its own analogue here in the odd case, namely that if $w \in \wt W_{2n+1}$ is $\mu$-permissible, then $w$ is less than or equal to its translation part.
\end{rk}

\begin{bibdiv}
\begin{biblist}


\bib{goertz01}{article}{
  author={G{\"o}rtz, Ulrich},
  title={On the flatness of models of certain Shimura varieties of PEL-type},
  journal={Math. Ann.},
  volume={321},
  date={2001},
  number={3},
  pages={689--727},
  issn={0025-5831},
}

\bib{goertz03}{article}{
  author={G{\"o}rtz, Ulrich},
  title={On the flatness of local models for the symplectic group},
  journal={Adv. Math.},
  volume={176},
  date={2003},
  number={1},
  pages={89--115},
}

\bib{goertz05}{article}{
  author={G{\"o}rtz, Ulrich},
  title={Topological flatness of local models in the ramified case},
  journal={Math. Z.},
  volume={250},
  date={2005},
  number={4},
  pages={775--790},
  issn={0025-5874},
}

\bib{haines01b}{article}{
   author={Haines, Thomas J.},
   title={Test functions for Shimura varieties: the Drinfeld case},
   journal={Duke Math. J.},
   volume={106},
   date={2001},
   number={1},
   pages={19--40},
   issn={0012-7094},
}

\bib{hngo02b}{article}{
  author={Haines, Thomas J.},
  author={Ng{\^o}, B. C.}*{inverted={yes}},
  title={Alcoves associated to special fibers of local models},
  journal={Amer. J. Math.},
  volume={124},
  date={2002},
  number={6},
  pages={1125--1152},
  issn={0002-9327},
}

\bib{kottrap00}{article}{
  author={Kottwitz, R.},
  author={Rapoport, M.},
  title={Minuscule alcoves for ${\rm GL}\sb n$ and ${\rm GSp}\sb {2n}$},
  journal={Manuscripta Math.},
  volume={102},
  date={2000},
  number={4},
  pages={403--428},
  issn={0025-2611},
}

\bib{paprap05}{article}{
  author={Pappas, G.},
  author={Rapoport, M.},
  title={Local models in the ramified case. II. Splitting models},
  journal={Duke Math. J.},
  volume={127},
  date={2005},
  number={2},
  pages={193--250},
  issn={0012-7094},
}

\bib{paprap09}{article}{
  author={Pappas, G.},
  author={Rapoport, M.},
  title={Local models in the ramified case. III. Unitary groups},
  journal={J. Inst. Math. Jussieu},
  date={2009},
  volume={8},
  number={3},
  pages={507--564},
}

\bib{rap05}{article}{
  author={Rapoport, Michael},
  title={A guide to the reduction modulo $p$ of Shimura varieties},
  language={English, with English and French summaries},
  note={Automorphic forms. I},
  journal={Ast\'erisque},
  number={298},
  date={2005},
  pages={271--318},
  issn={0303-1179},
}

\bib{rapzink96}{book}{
  author={Rapoport, M.},
  author={Zink, Th.},
  title={Period spaces for $p$-divisible groups},
  series={Annals of Mathematics Studies},
  volume={141},
  publisher={Princeton University Press},
  place={Princeton, NJ},
  date={1996},
  pages={xxii+324},
  isbn={0-691-02782-X},
  isbn={0-691-02781-1},
}

\bib{sm09a}{article}{
   author={Smithling, Brian D.},
   title={Topological flatness of orthogonal local models in the split, even case. I},
   status={to appear in Math. Ann.},
  label={Sm1},
}

\bib{sm09c}{article}{
   author={Smithling, Brian D.},
   title={Topological flatness of local models for ramified unitary groups. I. The odd dimensional case},
   journal={Adv. Math.},
   volume={226},
   number={4},
   date={2011},
   pages={3160--3190},
  label={Sm2},
}

\bib{sm10a}{article}{
  author={Smithling, Brian D.},
  title={Topological flatness of local models for ramified unitary groups. II. The even dimensional case},
  status={in preparation},
  label={Sm3},
}

\bib{st68}{book}{
  author={Steinberg, Robert},
  title={Endomorphisms of linear algebraic groups},
  series={Memoirs of the American Mathematical Society, No. 80},
  publisher={American Mathematical Society},
  place={Providence, R.I.},
  date={1968},
  pages={108},
}

\end{biblist}
\end{bibdiv}

\end{document}